\theoremstyle{definition}
\newtheorem{defn}{Definition}[section]
\newtheorem{rem}[defn]{Remark}
\theoremstyle{plain}
\newtheorem{thm}[defn]{Theorem}
\newtheorem{prop}[defn]{Proposition}
\newtheorem{lem}[defn]{Lemma}
\newtheorem{prob}[defn]{Problem}
\numberwithin{equation}{section}
\title[Right-angled Artin groups on finite subgraphs of disk graphs]{Right-angled Artin groups on finite subgraphs of disk graphs}
\author[E.~Kuno]{Erika Kuno}
\address{
(Erika Kuno)
Department of Mathematics,
Tokyo Institute of Technology,
2-12-1 Oh-okayama, Meguro-ku, Tokyo 152-8551, Japan
}
\email{kuno.e.aa@m.titech.ac.jp}
\date{\today}
\begin{document}
\maketitle
\begin{abstract}
Koberda proved that if a graph $\Gamma$ is a full subgraph of a curve graph $\mathcal{C}(S)$ of an orientable surface $S$, then the right-angled Artin group $A(\Gamma)$ on $\Gamma$ is a subgroup of the mapping class group ${\rm Mod}(S)$ of $S$. On the other hand, for a sufficiently complicated surface $S$, Kim-Koberda gave a graph $\Gamma$ which is not contained in $\mathcal{C}(S)$, but $A(\Gamma)$ is a subgroup of ${\rm Mod}(S)$. In this paper, we prove that if $\Gamma$ is a full subgraph of a disk graph $\mathcal{D}(H)$ of a handlebody $H$, then $A(\Gamma)$ is a subgroup of the handlebody group ${\rm Mod}(H)$ of $H$. Further, we show that there is a graph $\Gamma$ which is not contained in some disk graphs, but $A(\Gamma)$ is a subgroup of the corresponding handlebody groups.
\end{abstract}
\section{Introduction}
Let $H=H_{g, n}$ be an orientable 3-dimensional handlebody of genus $g$ with $n$ marked points.
We regard its boundary $\partial H$ as a compact connected orientable surface $S=S_{g, n}$ of genus $g$ with $n$ marked points.
We denote by
\begin{align*}
\xi(H)={\rm max}\{3g-3+n, 0\}
\end{align*}
the {\it complexity} of $H$, a measure which coincides with the number of components of a maximal multi-disk in $H$. 
We also define the complexity $\xi(S)$ of $S$ as $\xi(S)={\rm max}\{3g-3+n, 0\}$.
Let $\Gamma$ be a finite simplicial graph.
Through this paper, we denote by ${\it V}(\Gamma)$ and $ E(\Gamma)$ the vertex set and the edge set of $\Gamma$ respectively.
The {\it right-angled Artin group} on $\Gamma$ is defined by
\begin{align*}
A(\Gamma)=\left\langle V(\Gamma)\mid \left[v_{i}, v_{j}\right]=1 {\rm ~if~and~only~if}~\{v_{i}, v_{j} \}\in E(\Gamma) \right\rangle.
\end{align*}
For two groups $G_{1}$ and $G_{2}$, we write $G_{1}\leq G_{2}$ if there is an embedding from $G_{1}$ to $G_{2}$, that is, an injective homomorphism from $G_{1}$ to $G_{2}$.
Similarly, we write $\Lambda\leq \Gamma$ for two graphs $\Gamma$ and $\Lambda$ if $\Lambda$ is isomorphic to an induced subgraph of $\Gamma$.
We denote by ${\rm Mod}(H)$ and ${\rm Mod}(S)$ the handlebody group of $H$ and the mapping class group of $S$ respectively.

Right-angled Artin groups were introduced by Baudisch~\cite{Baudisch81}.
Recently, these groups have attracted much interest from $3$-dimensional topology and geometric group theory through the work of Haglund-Wise~\cite{Haglund-Wise08},\cite{Haglund-Wise12} on special cube complexes.
In particular, various mathematicians investigate subgroups of right-angled Artin groups or right-angled Artin subgroups of groups.
Crisp-Sageev-Sapir~\cite{Crisp-Sageev- Sapir08} studied surface subgroups of right-angled Artin groups.
Kim-Koberda~\cite{Kim-Koberda15} proved that for any tree $T$, there exists a pure braid group ${\rm PB}_{n}$ such that $A(T)$ is embedded in ${\rm PB}_{n}$.
Bridson~\cite{Bridson12} proved that the isomorphism problem for the mapping class group of a surface whose genus is sufficiently large is unsolvable by using right-angled Artin subgroup in mapping class groups.
See Koberda~\cite{Koberda13} for other researches about right-angled Artin groups and their subgroups.

On the other hand, the geometry of mapping class groups of surfaces is well understood. 
A handlebody group ${\rm Mod}(H)$ of $H$ is a subgroup of the mapping class group ${\rm Mod}(S)$ of $S$.
Hamenst\"{a}dt-Hensel~\cite{Hamenstadt-Hensel12} showed that ${\rm Mod}(H)$ is exponentially distorted in ${\rm Mod}(S)$.
Therefore, the geometric properties of handlebody groups may be different from those of mapping class groups.
Furthermore, disk graphs are not quasi-isometric to curve graphs (see Masur-Schleimer~\cite{Masur-Schleimer13}).
Our motivation of this article is whether the following three propositions are true when we change the assumptions of mapping class groups and curve graphs to handlebody groups and disk graphs.

\begin{prop}{\rm(}\cite[Theorem 1.1 and Proposition 7.16]{Koberda12}{\rm)}\label{main_Koberda12}
If $\Gamma\leq \mathcal{C}(S)$, then $A(\Gamma)\leq {\rm Mod}(S)$.
\end{prop}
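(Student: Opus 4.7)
The plan is to build an explicit embedding $\phi\colon A(\Gamma)\hookrightarrow {\rm Mod}(S)$ by sending each generator to a sufficiently high power of the corresponding Dehn twist. Write $V(\Gamma)=\{v_{1},\dots,v_{n}\}$ and identify each $v_{i}$ with a simple closed curve $c_{i}$ on $S$, so that $\{v_{i},v_{j}\}\in E(\Gamma)$ precisely when $c_{i}$ and $c_{j}$ admit disjoint representatives. Fix an integer $N\geq 1$ (to be chosen later) and set $\phi(v_{i})=T_{c_{i}}^{N}$. Since Dehn twists along disjoint curves commute, $\phi$ respects the defining relators of $A(\Gamma)$ and extends to a well-defined homomorphism; the whole content of the theorem is to show that for $N$ large enough, $\phi$ is injective.

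For injectivity, the natural tool is the subsurface projection machinery of Masur--Minsky. To each curve $c_{i}$, associate its annular neighborhood $A_{i}$ with its annular curve graph, and note two facts: (i) $T_{c_{i}}^{N}$ translates the projection $\pi_{A_{i}}$ by a distance comparable to $N$, and (ii) when $c_{j}$ is disjoint from $c_{i}$ the Dehn twist $T_{c_{j}}$ preserves $\pi_{A_{i}}$, while when $c_{j}$ and $c_{i}$ intersect, the Behrstock inequality tightly controls how $\pi_{A_{i}}$ can change. This is exactly the combinatorial input needed to run a ping-pong argument on the action of the candidate subgroup on $\mathcal{C}(S)$.

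The argument then proceeds by taking a nontrivial reduced word $w\in A(\Gamma)$, put in a Salvetti-style normal form, and analyzing how the partial products of $\phi(w)$ move a carefully chosen basepoint $\alpha\in\mathcal{C}(S)$. Induction on the length of $w$ (equivalently on $|V(\Gamma)|$, exploiting the graph-product decomposition of $A(\Gamma)$ along the star and link of a chosen vertex) and the projection estimates above show that some annular projection coordinate of $\phi(w)\cdot\alpha$ grows without bound as one reads the word, forcing $\phi(w)\neq 1$. The freedom to choose $N$ as large as desired absorbs the additive errors in the Behrstock inequality.

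The principal obstacle is the combinatorial bookkeeping in the ping-pong argument in the presence of commuting generators: a reduced word in $A(\Gamma)$ does not have a literally unique normal form, so one must track the projection growth in a way insensitive to the admissible reorderings. This is precisely where the hypothesis that $\Gamma\leq\mathcal{C}(S)$ is a \emph{full} subgraph is essential. If $\Gamma$ were only a (non-full) subgraph, the commutation structure among the Dehn twists would include extra relations (arising from disjointness of curves whose vertices are non-adjacent in $\Gamma$), and the projection-growth argument would break down, since a word that is reduced in $A(\Gamma)$ could become trivializable in ${\rm Mod}(S)$ after such hidden commutations.
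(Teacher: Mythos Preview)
The paper does not actually prove this proposition: it is quoted from \cite{Koberda12} as background, and when the same assertion reappears as Lemma~3.2 the proof reads simply ``For the proof, see \cite{Koberda12}.'' So there is no in-paper argument to compare your sketch against.

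That said, your outline is \emph{not} Koberda's proof. Koberda works on the space of projective measured laminations $\mathrm{PML}(S)$: he shows that sufficiently high powers of Dehn twists (more generally, of mapping classes with pairwise-distinct canonical reduction systems) act with source--sink dynamics on suitable neighborhoods in $\mathrm{PML}(S)$, and then runs a ping-pong argument there. The commuting generators are handled by a delicate analysis of limit sets and their accumulation behavior, not by subsurface projection estimates. Your route via annular projections and the Behrstock inequality is closer in spirit to later work (e.g., the projection-complex and quasi-tree arguments of Bestvina--Bromberg--Fujiwara, or the partial pseudo-Anosov ping-pong of Clay--Leininger--Mangahas); it can be made to work, but it is a genuinely different proof. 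Each approach has its cost: Koberda's $\mathrm{PML}$ argument requires Thurston's lamination theory and careful compactness arguments, while yours requires the full Masur--Minsky hierarchy machinery and nontrivial bookkeeping to cope with the non-uniqueness of normal forms in the presence of commuting generators.

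One caution about your sketch: the phrase ``induction on $|V(\Gamma)|$, exploiting the graph-product decomposition of $A(\Gamma)$ along the star and link of a chosen vertex'' is not quite right as stated. The amalgamated decomposition $A(\Gamma)=A(\mathrm{St}(v))\ast_{A(\mathrm{Link}(v))}A(\Gamma\setminus\{v\})$ does not by itself reduce the injectivity question, because the amalgamating subgroup $A(\mathrm{Link}(v))$ need not be simpler than $A(\Gamma)$ in the relevant sense, and the induction has to be set up with care (typically over syllable length of a fixed normal form, not over $|V(\Gamma)|$). If you pursue this line, you should follow the explicit projection-growth bookkeeping rather than a structural induction on the graph.
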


\begin{prop}{\rm(}\cite[Theorem 2]{Kim-Koberda13}{\rm)}\label{main_Kim-Koberda13}
Let $S$ be an orientable surface with $\xi(S)\leq 2$.
If $A(\Gamma)\leq {\rm Mod}(S)$, then $\Gamma\leq \mathcal{C}(S)$.
\end{prop}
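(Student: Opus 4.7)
The plan is to combine the Nielsen--Thurston classification with the rank of abelian subgroups of $\mathrm{Mod}(S)$, handling the cases $\xi(S)\leq 1$ and $\xi(S)=2$ separately. Given an embedding $\rho\colon A(\Gamma)\hookrightarrow\mathrm{Mod}(S)$, I would first replace each generator $v\in V(\Gamma)$ by a sufficiently high power so that $\rho(v)$ becomes a pure mapping class. Each such element then has a canonical reduction system $R_{v}$, an essential multicurve on $S$, together with pseudo-Anosov pieces on the components of the cut-open surface; this normalisation preserves the embedded right-angled Artin group up to finite index.

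When $\xi(S)\leq 1$, the group $\mathrm{Mod}(S)$ is virtually free (a close relative of $\mathrm{SL}(2,\mathbb{Z})$), so it contains no $\mathbb{Z}^{2}$; hence $\Gamma$ must be edgeless, and any edgeless graph embeds as an independent set in $\mathcal{C}(S)$. When $\xi(S)=2$, the Birman--Lubotzky--McCarthy rank bound forces $\Gamma$ to be triangle-free, since a clique $K_{n}\leq\Gamma$ would yield $\mathbb{Z}^{n}\leq\mathrm{Mod}(S)$ and $n\leq\xi(S)=2$. For each edge $\{v,w\}\in E(\Gamma)$, the commuting pure mapping classes $\rho(v),\rho(w)$ must have compatible canonical reduction systems, meaning $R_{v}\cup R_{w}$ is itself a multicurve. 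The aim is then to select, for each $v$, a single curve $\phi(v)\in R_{v}$ so that $\phi\colon V(\Gamma)\to V(\mathcal{C}(S))$ realises $\Gamma$ as an induced subgraph.

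The principal obstacle is verifying that $\phi$ can be chosen to be \emph{induced}: whenever $\{v,w\}\notin E(\Gamma)$, the curves $\phi(v)$ and $\phi(w)$ must actually intersect on $S$. Otherwise disjoint representatives would give commuting Dehn twists, producing a commutation relation that should contradict the defining RAAG relations of $A(\Gamma)$. Making this precise requires a careful combinatorial choice of $\phi$, exploiting the restricted structure of the curve graph and of centralisers of multicurves when $\xi(S)=2$; this is exactly where the low-complexity hypothesis is essential, because for $\xi(S)\geq 3$ there is genuine room for incompatible reduction systems to coexist without forcing curve-graph adjacency, and no such clean dictionary between edges of $\Gamma$ and edges of $\mathcal{C}(S)$ can be expected.
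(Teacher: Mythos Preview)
The paper does not give its own proof of this proposition: it is quoted verbatim as \cite[Theorem~2]{Kim-Koberda13} and serves only as background motivation. There is therefore no in-paper argument to compare against directly. The closest thing the paper does prove is the handlebody analog, Theorem~\ref{second_thm}. For $\xi\leq 1$ that proof runs exactly along the lines you propose (virtual freeness of the ambient group forces $A(\Gamma)$ to be free, hence $\Gamma$ is edgeless and embeds as an independent set), so in that range your sketch is correct and matches the paper's method for the parallel statement.

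For $\xi=2$ the situation diverges. The paper does \emph{not} carry out any Nielsen--Thurston or canonical-reduction-system analysis; instead, Theorem~\ref{second_thm} imposes the additional hypothesis that the embedding $A(\Gamma)\to\mathrm{Mod}(H)$ be \emph{standard} (Definition~\ref{standard_embeding}), meaning each generator is already sent to a multi-disk twist with no nested supports. Under that hypothesis the argument collapses to a short combinatorial observation: triangle-freeness of $\mathcal{D}(H)$ together with condition~(ii) forces every non-isolated vertex to land on a power of a single disk twist, and reading off those disks gives the induced embedding $\Gamma\hookrightarrow\mathcal{D}(H)$. Your proposal for $\xi(S)=2$ is aiming at the genuinely stronger Kim--Koberda surface statement (no standardness hypothesis), but as you yourself flag, it is incomplete: you have not explained how to select one curve $\phi(v)$ from each $R_v$ so that the resulting map is \emph{induced}, and this is precisely where the content lies---for instance when $\rho(v)$ is a multitwist on a full pants decomposition, or has a pseudo-Anosov piece on a complexity-one subsurface, the choice of $\phi(v)$ is not canonical and compatibility across non-edges must still be argued. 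So your plan is a correct-in-spirit outline of the cited result rather than of anything proved in the present paper, and the $\xi=2$ case remains a genuine gap in your write-up.
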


\begin{prop}{\rm(}\cite[Theorem 3]{Kim-Koberda13}{\rm)}\label{second_Kim-Koberda13}
Let $S$ be an orientable surface with $\xi(S)\geq 4$.
Then there exists a finite graph $\Gamma$ such that $A(\Gamma)\leq {\rm Mod}(S)$ but $\Gamma\not\leq \mathcal{C}(S)$.
\end{prop}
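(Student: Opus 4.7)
\emph{Proof plan.} The plan is to exhibit, for the given $S$ with $\xi(S)\geq 4$, a specific finite graph $\Gamma$ together with an injection $A(\Gamma)\hookrightarrow \mathrm{Mod}(S)$, and then to verify separately that $\Gamma$ is not an induced subgraph of $\mathcal{C}(S)$. The hypothesis $\xi(S)\geq 4$ plays a role on both sides: it guarantees enough room inside $S$ to realize a rich intersection pattern among essential subsurfaces, while simultaneously providing enough ``slack'' that this pattern is genuinely richer than anything realizable by pairwise-disjoint simple closed curves.

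\textbf{Step 1 (producing the embedding).} I would first select a finite family of essential connected subsurfaces $Y_1,\dots,Y_n\subset S$, not necessarily annular neighborhoods of single curves, and declare $\Gamma$ to be the graph on vertices $v_1,\dots,v_n$ in which $\{v_i,v_j\}\in E(\Gamma)$ iff $Y_i$ and $Y_j$ admit disjoint isotopy representatives. On each $Y_i$ I pick a pseudo-Anosov $\phi_i\in\mathrm{Mod}(Y_i)\subset\mathrm{Mod}(S)$. The defining commutation relations of $A(\Gamma)$ then hold for free: disjoint supports force commutativity. To upgrade the resulting map $A(\Gamma)\to \langle\phi_1,\dots,\phi_n\rangle$ to an isomorphism onto its image, I would run the subsurface-projection/ping-pong argument from the proof of Proposition~\ref{main_Koberda12} (adapted to non-annular supports, as in Koberda's treatment): sufficiently large powers $\phi_i^N$ act on the relevant subsurface projections with strong enough North--South dynamics that $v_i\mapsto \phi_i^N$ is faithful, giving $A(\Gamma)\leq \mathrm{Mod}(S)$.

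\textbf{Step 2 (obstructing $\Gamma\leq\mathcal{C}(S)$).} The harder half is to design the family $\{Y_i\}$ so that the resulting $\Gamma$ provably fails to occur as an induced subgraph of $\mathcal{C}(S)$. The principal structural feature of the curve graph is its recursive link structure: for any $c\in\mathcal{C}(S)$, the link $\mathrm{Lk}_{\mathcal{C}(S)}(c)$ is naturally identified with $\mathcal{C}(S\setminus c)$, a curve graph of strictly smaller complexity. I would arrange the $Y_i$ so that some vertex $v_{i_0}$ of $\Gamma$ has a link whose combinatorics cannot appear as a curve graph of any proper essential subsurface of $S$ --- for instance by forcing the link at $v_{i_0}$ to contain, as an induced subgraph, one of the small graphs ruled out in complexity $\xi(S)-1$ by a Proposition~\ref{main_Kim-Koberda13}-type base case. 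Overlapping (non-disjoint) subsurfaces can realize such a link pattern while no configuration of pairwise disjoint simple closed curves can.

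\textbf{Main obstacle.} The delicate point is calibrating the two steps against each other. The subsurfaces $Y_i$ must be chosen so that Koberda's ping-pong estimates close up and yield a faithful representation, while at the same time their disjointness graph $\Gamma$ must violate a genuinely curve-graph-theoretic invariant. Pinning down the right invariant --- one built from the inductive link structure and sensitive to the numerical threshold $\xi(S)\geq 4$ --- and simultaneously exhibiting a concrete small family of overlapping subsurfaces that witnesses it is where the real content of the argument lies; indeed, the threshold is sharp, since by Proposition~\ref{main_Kim-Koberda13} no such discrepancy is available once $\xi(S)\leq 2$.
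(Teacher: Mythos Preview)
Your plan is a reasonable outline, but it stops short of the actual content, and where it is concrete it diverges from the Kim--Koberda argument (which the present paper mirrors in Section~\ref{Proof of third theorem} for the handlebody case).

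\medskip
\noindent\textbf{How the actual proof goes.} Kim--Koberda do not build the embedding via partial pseudo-Anosovs on overlapping subsurfaces. Instead they write down two explicit finite graphs $\Gamma_0$ and $\Gamma_1$ (on seven and eight vertices, both containing a fixed $4$-cycle $C_4$ on $\{a,b,c,d\}$), exhibit $\Gamma_1$ as an induced subgraph of $\mathcal{C}(S)$ by drawing curves, and then prove an \emph{algebraic} embedding $A(\Gamma_0)\hookrightarrow A(\Gamma_1)$ sending one distinguished vertex $q$ to the product $ef$ of two commuting generators and fixing the rest (Lemma~\ref{first_lemma_of_third_thm_Kim-Koberda13}). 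Composing with $A(\Gamma_1)\leq\mathrm{Mod}(S)$ from Proposition~\ref{main_Koberda12} gives $A(\Gamma_0)\leq\mathrm{Mod}(S)$ using nothing but Dehn twists. The obstruction $\Gamma_0\not\leq\mathcal{C}(S)$ is then a direct topological case analysis: any realization of the $C_4\subset\Gamma_0$ forces a decomposition $\partial H=S_0\cup S_1\cup S_2$ into subsurfaces of prescribed types (Lemmas~\ref{17_cases _of_complexity_five_handlebodies} and~\ref{five_cases_of complexity_four_handlebodies} here, and the analogous lemma for surfaces in Kim--Koberda), and in every case the remaining vertices $g,h,q$ of $\Gamma_0$ cannot be placed with the required intersection pattern. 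The passage to higher complexity is handled by joining $\Gamma_0$ with a complete graph, not by an inductive link argument.

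\medskip
\noindent\textbf{Where your plan falls short.} Your Step~1 would work in principle---Koberda's theorem does extend to partial pseudo-Anosovs---but it is heavier than needed, and you never say which subsurfaces $Y_i$ to take. The real gap is Step~2: you correctly identify that the obstruction must exploit the curve graph's structure, but the recursive link strategy you sketch does not close. The link of a curve in $\mathcal{C}(S)$ lives in complexity $\xi(S)-1$, and for $\xi(S)=4$ that is complexity~$3$, which is not covered by the base case of Proposition~\ref{main_Kim-Koberda13} (only $\xi\leq 2$). You would need either an additional argument at $\xi=3$ or a different invariant altogether. Kim--Koberda sidestep this entirely: their obstruction is not an induction on links but a hands-on enumeration of how a $C_4$ of curves can sit in a complexity-$4$ surface, followed by checking that the extra vertices $g,h,q$ are incompatible with each configuration. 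The concrete choice of $\Gamma_0$ (and the auxiliary $\Gamma_1$ with the $q\mapsto ef$ trick) is precisely the ``missing idea'' your final paragraph gestures at.
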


\begin{defn}\label{standard_embeding}
An embedding $f$ from $A(\Gamma)$ to ${\rm Mod}(H)$ is {\it standard} if $f$ satisfies the following two conditions.
\begin{itemize}
\item[(i)] The map $f$ maps each vertex of $\Gamma$ to a multi-disk twist;

\item[(ii)] For two distinct vertices $u$ and $v$ of $\Gamma$, the support of $f(u)$ is not contained in the support of $f(v)$.
\end{itemize}
\end{defn}

We first prove the following three theorems.
\begin{thm}\label{main_thm}
If $\Gamma\leq \mathcal{D}(H)$, then $A(\Gamma)\leq {\rm Mod}(H)$.
\end{thm}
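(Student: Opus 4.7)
The plan is to deduce Theorem \ref{main_thm} from Proposition \ref{main_Koberda12} by passing through the boundary map $\partial \colon \mathcal{D}(H) \to \mathcal{C}(S)$ and then observing that the resulting embedding into ${\rm Mod}(S)$ actually takes values in ${\rm Mod}(H)$.

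First I would show that $\Gamma \leq \mathcal{D}(H)$ implies $\Gamma \leq \mathcal{C}(S)$. Fix a realization of $\Gamma$ as an induced subgraph of $\mathcal{D}(H)$, with vertices corresponding to essential disks $D_v \subset H$ for $v \in V(\Gamma)$, and consider the set of boundary curves $\{\partial D_v\} \subset V(\mathcal{C}(S))$. The boundary map on vertices is injective because two essential properly embedded disks in the handlebody $H$ whose boundaries are isotopic in $S$ are themselves properly isotopic. For adjacency, the key input from 3-manifold topology is the standard innermost-disk fact: two essential disks with disjoint boundaries can always be isotoped to be disjoint (any intersection circles sit in the interiors of both disks and can be removed by innermost-disk surgeries). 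Consequently the boundary curves span an induced subgraph of $\mathcal{C}(S)$ isomorphic to $\Gamma$, so $\Gamma \leq \mathcal{C}(S)$.

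Next I would invoke Proposition \ref{main_Koberda12} to obtain an embedding $f \colon A(\Gamma) \hookrightarrow {\rm Mod}(S)$. Koberda's construction sends each vertex $v$ to a sufficiently high power $T_{\partial D_v}^{N}$ of the Dehn twist about $\partial D_v$. Since each curve $\partial D_v$ bounds the disk $D_v$ in $H$, the corresponding Dehn twist is realized by a disk twist and therefore extends to an element of ${\rm Mod}(H) \leq {\rm Mod}(S)$; the same is true of every power. Hence the image of $f$ lies inside ${\rm Mod}(H)$, yielding the embedding $A(\Gamma) \leq {\rm Mod}(H)$ as desired. As a bonus the embedding so produced is standard in the sense of Definition \ref{standard_embeding}: each generator is mapped to a (single-)disk twist, and distinct generators correspond to distinct, non-nested disk supports.

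The main obstacle I anticipate is the careful justification of the first step, i.e.\ that the boundary map induces a full embedding of simplicial graphs $\mathcal{D}(H) \hookrightarrow \mathcal{C}(S)$; the innermost-disk argument is standard but should be cited or spelled out. Once this is in hand, the theorem is essentially a direct corollary of Koberda's theorem, because the Dehn-twist powers used in his construction are, in our setting, precisely disk twists and therefore already live in ${\rm Mod}(H)$.
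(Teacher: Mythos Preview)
Your proposal is correct and follows essentially the same route as the paper: both pass to the curve graph via the boundary map $d\mapsto\partial d$, invoke Koberda's theorem (Proposition~\ref{main_Koberda12}, in the paper stated as Lemma~\ref{lemma_of_main_thm_Kim-Koberda13}) to obtain an injection $A(\Gamma)\hookrightarrow{\rm Mod}(S)$ sending $v\mapsto T_{\partial D_v}^{N}$, and then note that these Dehn twists are restrictions of disk twists and hence already lie in ${\rm Mod}(H)$. The only difference is that you spell out the innermost-disk argument for why $\mathcal{D}(H)\hookrightarrow\mathcal{C}(S)$ is a \emph{full} embedding, whereas the paper simply records this inclusion in the preliminaries and uses it without further comment.
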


\begin{thm}\label{second_thm}
Let $H$ be a handlebody with $\xi(H)=0$ or $\xi(H)=1$.
If $A(\Gamma)\leq {\rm Mod}(H)$, then $\Gamma\leq \mathcal{D}(H)$.
Let $H$ be a handlebody with $\xi(H)=2$.
If there exists a standard embedding $f\colon A(\Gamma)\rightarrow {\rm Mod}(H)$, then $\Gamma\leq \mathcal{D}(H)$.
\end{thm}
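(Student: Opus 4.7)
The plan is to adapt Kim--Koberda's approach to Proposition~\ref{main_Kim-Koberda13} into the handlebody setting, using the disk graph $\mathcal{D}(H)$ as the analog of the curve graph.

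For the low-complexity cases $\xi(H)\le 1$, I would argue by case analysis, since there are only finitely many handlebodies to consider: the ball $H_{0,n}$ with $n\le 4$, the solid torus $H_{1,0}$, and the solid torus with a single marked point $H_{1,1}$. In each case I would describe $\mathcal{D}(H)$ explicitly and analyze ${\rm Mod}(H)$ in enough detail to classify its right-angled Artin subgroups directly. For instance, for $H_{1,0}$ the disk graph is a single vertex and ${\rm Mod}(H)$ is virtually cyclic (essentially generated by the twist along the meridian), so the only possible $A(\Gamma)$'s are $1$ and $\mathbb{Z}$, corresponding to the empty subgraph and the unique vertex of $\mathcal{D}(H)$. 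The other cases are similarly restrictive, and one expects a short direct check rather than any general machinery.

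For the main case $\xi(H)=2$ with a standard embedding $f\colon A(\Gamma)\to {\rm Mod}(H)$, each vertex $v$ is sent to a twist $f(v)=T_{D_v}$ along a multi-disk $D_v=\bigsqcup_j D_{v,j}$, and since $\xi(H)=2$, each such $D_v$ has at most two components. I would translate the defining relations of $A(\Gamma)$ into geometric statements about these multi-disks. Namely, if $v$ and $w$ are adjacent in $\Gamma$, then $f(v)$ and $f(w)$ commute, which by the standard commutation criterion for multi-twists forces $D_v$ and $D_w$ to be isotopable off each other; conversely, if $v$ and $w$ are non-adjacent, the twists fail to commute, so some component of $D_v$ essentially intersects some component of $D_w$. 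Condition~(ii) of the standard embedding rules out containments $D_v\subseteq D_w$ for distinct $v,w$.

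The heart of the proof is then to choose, for each $v\in V(\Gamma)$, a single component $d_v\in D_v$ so that $v\mapsto d_v$ realizes $\Gamma$ as an induced subgraph of $\mathcal{D}(H)$. The disjointness side is automatic (any two components of disjoint multi-disks are disjoint). The non-adjacency side is the real content: one must pick $d_v$ and $d_w$ that actually intersect whenever $v$ and $w$ are non-adjacent, and do so consistently across all pairs simultaneously. I would exploit the bound $|D_v|\le 2$ together with the non-containment from~(ii) to set up a finite combinatorial selection, essentially a two-coloring of the vertices of $\Gamma$ (``first'' versus ``second'' component), arguing that some consistent coloring reproduces the intersection pattern of $\Gamma$ in $\mathcal{D}(H)$.

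I expect the main obstacle to be precisely this last step: avoiding the pathology where, for some non-adjacent pair $v,w$, every component chosen from $D_v$ happens to be disjoint from every component chosen from $D_w$, even though $D_v$ and $D_w$ as a whole do intersect. Ruling this out requires using both the hypothesis $\xi(H)=2$ (which severely limits how multi-disks can sit inside $H$) and the non-containment condition~(ii). The remaining arguments---the correspondence between commuting multi-disk twists and disjoint multi-disks, and the case analysis for $\xi(H)\le 1$---I expect to be routine adaptations of the surface versions.
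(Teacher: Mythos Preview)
Your low-complexity analysis ($\xi(H)\le 1$) matches the paper's case-by-case treatment. For $\xi(H)=2$, however, the paper's argument is much simpler than the selection scheme you propose, and the ``main obstacle'' you anticipate does not in fact arise.

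The paper first reduces to connected $\Gamma$, using that $\mathcal{D}(H)$ has infinite diameter and that $H$ carries a pseudo-Anosov: one embeds each component of $\Gamma$ separately and then spreads the images apart so that disks coming from distinct components intersect. You omit this reduction, and without it your selection problem is genuinely delicate---an isolated vertex $v$ with a two-component $D_v$ could have its two components intersecting disjoint portions of the remaining disks, with no single choice of $d_v$ working globally. Once $\Gamma$ is connected with at least one edge, every vertex $v$ has a neighbor $u$; since $f(u)$ and $f(v)$ commute, the multi-disks $D_u$ and $D_v$ are disjoint, and $|D_u|+|D_v|\le\xi(H)=2$ forces $|D_u|=|D_v|=1$. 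Thus every $f(v)$ is already a power of a single disk twist $\delta_{d_v}$, condition~(ii) makes $v\mapsto d_v$ injective, and adjacency versus non-adjacency in $\Gamma$ translates directly into disjointness versus intersection in $\mathcal{D}(H)$. No two-coloring or component selection is needed; what you flag as the heart of the argument collapses to a one-line count once the reduction to connected graphs is in place.
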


\begin{thm}\label{third_thm}
For $H=H_{0, 7}$ and $H=H_{1, 5}$, there exists a finite graph $\Gamma$ such that $A(\Gamma)\leq {\rm Mod}(H)$ but $\Gamma\not\leq \mathcal{D}(H)$.
\end{thm}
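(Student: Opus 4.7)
The plan is to handle the two cases separately, in each reducing to Proposition~\ref{second_Kim-Koberda13}. A useful automatic input is that $\mathcal{D}(H)$ is always an induced subgraph of $\mathcal{C}(\partial H)$, so to ensure $\Gamma \not\leq \mathcal{D}(H)$ it suffices to show $\Gamma \not\leq \mathcal{C}(\partial H)$.

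For $H=H_{0,7}$, every essential simple closed curve on $\partial H = S_{0,7}$ bounds a disk in $H$, and by the Alexander trick every homeomorphism of the pair (sphere, marked points) extends uniquely to $H$ fixing those points. Hence $\mathcal{D}(H_{0,7}) = \mathcal{C}(S_{0,7})$ and $\mathrm{Mod}(H_{0,7}) \cong \mathrm{Mod}(S_{0,7})$. Since $\xi(S_{0,7}) = 4 \geq 4$, Proposition~\ref{second_Kim-Koberda13} directly produces a graph $\Gamma$ with $A(\Gamma) \leq \mathrm{Mod}(H_{0,7})$ and $\Gamma \not\leq \mathcal{D}(H_{0,7})$.

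For $H=H_{1,5}$, the handlebody group is a proper subgroup of $\mathrm{Mod}(S_{1,5})$ and $\mathcal{D}(H)$ is a proper induced subgraph of $\mathcal{C}(S_{1,5})$, so the reduction is not immediate. The plan is to take the graph $\Gamma$ provided by Proposition~\ref{second_Kim-Koberda13} for $S_{1,5}$ (valid since $\xi(S_{1,5}) = 5 \geq 4$), so that $\Gamma \not\leq \mathcal{C}(S_{1,5})$ and hence $\Gamma \not\leq \mathcal{D}(H_{1,5})$, and then to exhibit a larger graph $\Gamma'$ satisfying $\Gamma' \leq \mathcal{D}(H_{1,5})$ and $A(\Gamma) \leq A(\Gamma')$. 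Combining these via Theorem~\ref{main_thm} yields $A(\Gamma) \leq A(\Gamma') \leq \mathrm{Mod}(H_{1,5})$. A natural candidate disk system for $\Gamma'$ consists of the meridian disk of $H_{1,5}$ together with an assortment of separating disks determined by partitions of the five marked points; the added marked points create enough essential disks to have a reasonable chance of accommodating the required disjointness pattern.

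The main obstacle is the concrete geometric realization in the $H_{1,5}$ case: disks in $H_{1,5}$ are quite restricted (essentially one meridian up to isotopy, with all other essential disks determined by marked-point partitions), so matching the disjointness pattern required by Kim-Koberda's construction demands a careful case analysis or an explicit picture, exploiting the room afforded by $\xi(H_{1,5}) = 5$. If this direct realization fails, a fallback is to inspect the specific curves used in Kim-Koberda's embedding and apply a homeomorphism of $\partial H$ that repositions them onto disk-bounding curves; this is possible when the Kim-Koberda configuration avoids topological obstructions coming from the longitude of the solid torus.
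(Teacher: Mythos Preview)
Your treatment of $H_{0,7}$ is correct and coincides with the paper's: since $\mathcal{D}(H_{0,7})=\mathcal{C}(S_{0,7})$ and ${\rm Mod}(H_{0,7})\cong{\rm Mod}(S_{0,7})$, the result is inherited verbatim from Kim--Koberda.

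For $H_{1,5}$ your proposal is only a plan, and the gap you flag is real and not resolved. You want to invoke Proposition~\ref{second_Kim-Koberda13} at complexity $5$ so that $\Gamma\not\leq\mathcal{C}(S_{1,5})$ gives $\Gamma\not\leq\mathcal{D}(H_{1,5})$ for free, and then realize an intermediate graph $\Gamma'$ in $\mathcal{D}(H_{1,5})$. But Kim--Koberda's complexity-$5$ argument proceeds by passing to a complexity-$4$ subsurface and embedding their auxiliary graph there; the relevant subsurfaces of $S_{1,5}$ include $S_{1,4}$, and the auxiliary graph does \emph{not} embed in $\mathcal{D}(H_{1,4})$ (this is exactly the content of the paper's Appendix and the warning in Remark~\ref{Notes for high complexity cases of handlebody groups}). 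So the curves Kim--Koberda use need not bound disks, and your fallback of applying a homeomorphism of $\partial H$ cannot repair this: a surface homeomorphism has no reason to carry a non-meridional non-separating curve to a disk-bounding one, and a handlebody homeomorphism already preserves disk-bounding. In short, you have traded the hard step for an equally hard one and left it open.

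The paper avoids this trap by working with the explicit pair $(\Gamma_0,\Gamma_1)$ from the complexity-$4$ case. It draws a concrete disk system in $H_{1,5}$ realizing $\Gamma_1$ (so $A(\Gamma_0)\leq A(\Gamma_1)\leq{\rm Mod}(H_{1,5})$ via Theorem~\ref{main_thm}), and then proves $\Gamma_0\not\leq\mathcal{D}(H_{1,5})$ \emph{directly}: it classifies the possible topological types of the pair $(S_1,S_2)$ of regular neighborhoods coming from the $4$-cycle $\{a,b,c,d\}\subset\Gamma_0$ into seventeen cases (Lemma~\ref{17_cases _of_complexity_five_handlebodies}) and in each case derives a contradiction from the adjacency constraints on $g,h,q$. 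Note that this non-embedding is genuinely a statement about $\mathcal{D}(H_{1,5})$ and not about $\mathcal{C}(S_{1,5})$: indeed $\Gamma_0$ does embed in some complexity-$5$ disk graph (namely $\mathcal{D}(H_{0,8})$), so the shortcut through $\mathcal{C}(S_{1,5})$ is not available for $\Gamma_0$.
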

From Theorem~\ref{third_thm}, it follows that the converse of Theorem~\ref{main_thm} is generally not true.
Further, Kim-Koberda proved that having $N$-thick stars forces the converse of Proposition~\ref{main_Koberda12}.
\begin{prop}{\rm(}\cite[Theorem 5]{Kim-Koberda13}{\rm)}\label{third_Kim-Koberda13}
Suppose $S$ is a surface with $\xi(S)=N$ and $\Gamma$ is a finite graph with $N$-thick stars.
If $A(\Gamma)\leq {\rm Mod}(S)$, then $\Gamma\leq \mathcal{C}(S)$.
\end{prop}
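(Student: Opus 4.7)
The plan is to show that under the $N$-thick stars hypothesis, every vertex of $\Gamma$ must be sent by any embedding $A(\Gamma)\hookrightarrow{\rm Mod}(S)$ to a power of a Dehn twist along a single simple closed curve; adjacency in $\Gamma$ will then translate directly to disjointness in $\mathcal{C}(S)$. Concretely, I would begin with an embedding $\phi\colon A(\Gamma)\to{\rm Mod}(S)$ and invoke the normal-form machinery underlying Proposition~\ref{main_Koberda12}: after replacing each $\phi(v)$ by a suitable uniform power, one may assume $\phi(v)$ is a pure mapping class supported on a connected essential subsurface $F_{v}\subseteq S$, acting either as a power of a Dehn twist along the core of an annular $F_{v}$ or as a partial pseudo-Anosov on a higher-complexity $F_{v}$. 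The Nielsen-Thurston structure of commuting pure classes then forces that whenever $\{u,v\}\in E(\Gamma)$, the subsurfaces $F_{u}$ and $F_{v}$ can be isotoped to have disjoint interiors.

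The decisive step uses the thickness hypothesis. For each vertex $v$, the $N$-thick star at $v$ provides $N$ vertices $w_{1},\dots,w_{N}\in{\rm Lk}(v)$ whose pairwise combinatorial configuration forces the images $\phi(w_{1}),\dots,\phi(w_{N})$ to have pairwise essentially disjoint supports, all disjoint from $F_{v}$. Since any family of pairwise disjoint essential subsurfaces contributes at least its count to the curve budget of a pants decomposition, and $\xi(S)=N$, these $N$ subsurfaces already exhaust $S$; consequently $F_{v}$ cannot carry any nontrivial simple closed curve of its own and must be an annular neighborhood of a single curve $\gamma_{v}$. Hence $\phi(v)$ is a nontrivial power of the Dehn twist along $\gamma_{v}$.

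With this in hand, the assignment $v\mapsto\gamma_{v}$ lands in $V(\mathcal{C}(S))$, is injective (by injectivity of $\phi$ on the cyclic subgroups $\langle v\rangle$ and the fact that Dehn twists along distinct curves are distinct mapping classes), and is edge-preserving in both directions: $\{u,v\}\in E(\Gamma)$ iff $\phi(u)$ and $\phi(v)$ commute iff $\gamma_{u}$ and $\gamma_{v}$ are disjoint. This yields the desired induced embedding $\Gamma\leq\mathcal{C}(S)$.

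The main obstacle I anticipate is extracting from the formal Kim-Koberda notion of ``$N$-thick star'' precisely the combinatorial input needed in the thickness step: one must verify that the $N$ link witnesses are really forced into mutually disjoint supports, leaving no slack for $F_{v}$ to be, say, a four-holed sphere whose complementary pieces happen to accommodate the entire link configuration. Careful bookkeeping of the degenerate case where two commuting supports share a boundary curve, together with the vertex-versus-edge counts in a pants decomposition, is where the real combinatorial work will lie.
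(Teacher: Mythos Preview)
The paper does not itself prove Proposition~\ref{third_Kim-Koberda13}; it is quoted from Kim--Koberda. The paper does, however, prove the handlebody analogue Theorem~\ref{fourth_thm} by exactly the Kim--Koberda argument, and comparing your proposal to that proof exposes a genuine gap.

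Your use of the $N$-thick star hypothesis is off. The definition gives, for each vertex $v$, \emph{two} cliques $K,L\subseteq{\rm Link}(v)$, each of size $N-1$, with $K\cap L=\emptyset$; there is no guarantee of ``$N$ vertices $w_1,\dots,w_N\in{\rm Lk}(v)$'' with pairwise disjoint supports, since vertices of $K$ need not be adjacent to vertices of $L$. Even read correctly, a single clique $K\cup\{v\}$ only tells you that the supports of its $N$ images together fill a pants decomposition; this forces ${\rm supp}\,\phi(v)$ to lie in that pants decomposition, i.e.\ to be a multicurve, not a single curve. Your exhaustion heuristic cannot bridge this gap, and your standing assumption that each $F_v$ may be taken \emph{connected} is precisely the conclusion to be proved.

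The argument actually used (see the proof of Theorem~\ref{fourth_thm}) is algebraic and hinges on using both cliques at once. After the reduction to multitwists, write $A,B,C$ for the multicurves supporting $f(K),f(L),f(v)$. Since $\xi(S)=N$, both $A\cup C$ and $B\cup C$ are pants decompositions, and $f\langle K\cup\{v\}\rangle$, $f\langle L\cup\{v\}\rangle$ are finite-index in $\langle A\cup C\rangle\cong\mathbb{Z}^N$ and $\langle B\cup C\rangle\cong\mathbb{Z}^N$ respectively. Because $K\cap L=\emptyset$, the image $f\langle v\rangle\cong\mathbb{Z}$ has finite index in $\langle A\cup C\rangle\cap\langle B\cup C\rangle\supseteq\langle C\rangle\cong\mathbb{Z}^{|C|}$, forcing $|C|=1$. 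It is this intersection-of-two-maximal-abelian-subgroups step, not a subsurface-count, that pins the support down to one curve.
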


We also prove the following:
\begin{thm}\label{fourth_thm}
Suppose $H$ is a handlebody with $\xi(H)=N$ and $\Gamma$ is a finite graph with $N$-thick stars.
If there is a standard embedding $f\colon A(\Gamma)\rightarrow {\rm Mod}(H)$, then $\Gamma\leq \mathcal{D}(H)$.
\end{thm}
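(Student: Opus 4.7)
The strategy is to mirror Kim--Koberda's proof of Proposition~\ref{third_Kim-Koberda13}, transplanting the curve/surface arguments into the disk/handlebody setting. The standardness of $f$ gives a canonical multi-disk representative for each vertex together with a non-nesting property, and the $N$-thick star condition, combined with the complexity bound $\xi(H)=N$, is what forces each multi-disk to be reducible to a single essential disk for the purposes of recording (non-)adjacency.

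I would first fix, for each $v \in V(\Gamma)$, the multi-disk $D_v$ supporting $f(v)$, and arrange the collection $\{D_v\}_{v \in V(\Gamma)}$ to be pairwise in minimal position. Condition (ii) of Definition~\ref{standard_embeding} prevents any $D_u$ from being isotopic into $D_v$. When $\{u,v\} \in E(\Gamma)$, the elements $f(u)$ and $f(v)$ commute, and combined with (ii) this forces $D_u$ and $D_v$ to be disjoint after isotopy. When $\{u,v\} \notin E(\Gamma)$, the subgroup $\langle f(u),f(v)\rangle$ is non-abelian, so $D_u$ and $D_v$ must intersect essentially. This translates the algebraic data of $\Gamma$ into geometric data about the multi-disks inside $H$, in the same way as in the proof of Theorem~\ref{main_thm}.

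The core step is to select, for each $v$, a single component $d_v \subseteq D_v$ such that $v \mapsto d_v$ realizes $\Gamma$ as an induced subgraph of $\mathcal{D}(H)$. Fix $v$, and consider the $N$ vertices in its star witnessing $N$-thickness. Their multi-disks are each disjoint from $D_v$, and they interact among themselves exactly according to the adjacencies in $\operatorname{star}(v)$. A complexity count inside the complement of $D_v$ in $H$, using $\xi(H)=N$, shows that any extra essential component of $D_v$ beyond one would produce a configuration of pairwise-disjoint essential disks whose total contribution to the complexity exceeds $N$. Hence each $D_v$ can be reduced to a single essential disk $d_v$ without affecting its disjointness pattern with the other $D_w$.

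The main obstacle will be the joint compatibility of the choices $v \mapsto d_v$: for a non-adjacent pair $(u,v)$ one must ensure that $d_u$ and $d_v$ actually intersect, even though a priori some component of $D_u$ might be disjoint from some component of $D_v$. This is precisely the point at which the $N$-thick star hypothesis at both endpoints needs to be used in tandem with the bound $\xi(H)=N$ --- a direct analog of the filling-and-complexity argument in Kim--Koberda's proof --- to rule out such spurious disjointness. Once compatibility is secured, adjacency in $\Gamma$ matches disjointness of the $d_v$ in $H$, and $v\mapsto d_v$ yields the desired embedding $\Gamma \hookrightarrow \mathcal{D}(H)$.
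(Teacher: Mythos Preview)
Your outline has the right template but misses the decisive step, and as written it contains a genuine gap.

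The paper's argument does \emph{not} select a component $d_v\subseteq D_v$; it shows outright that $|D_v|=1$ for every vertex $v$. The mechanism is a rank/finite-index comparison using \emph{both} $N$-cliques through $v$: writing $A,B,C$ for the multi-disks supporting $f\langle K\rangle,\ f\langle L\rangle,\ f\langle v\rangle$, injectivity of $f$ forces $\mathbb{Z}^N\hookrightarrow\langle A\cup C\rangle$ and $\mathbb{Z}^N\hookrightarrow\langle B\cup C\rangle$, so $A\cup C$ and $B\cup C$ are maximal and $f\langle K\coprod\{v\}\rangle,\ f\langle L\coprod\{v\}\rangle$ sit with finite index in $\langle A\cup C\rangle,\ \langle B\cup C\rangle$. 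Intersecting these (using $\langle K,v\rangle\cap\langle L,v\rangle=\langle v\rangle$ in $A(\Gamma)$) yields that $f\langle v\rangle\cong\mathbb{Z}$ has finite index in $\langle C\rangle\cong\mathbb{Z}^{|C|}$, whence $|C|=1$. With that in hand there is no ``joint compatibility'' problem at all: the assignment $v\mapsto d_v$ is forced, and adjacency in $\Gamma$ matches disjointness in $\mathcal{D}(H)$ directly from the commuting relations and condition~(ii).

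Your ``complexity count inside the complement of $D_v$'' is not this argument, and a single $N$-clique together with condition~(ii) does \emph{not} force $|D_v|=1$. For instance, with $N=3$ and a maximal multi-disk $\{a,b,c\}$, the assignment
\[
v\mapsto\delta_a\delta_b,\qquad u_1\mapsto\delta_b\delta_c,\qquad u_2\mapsto\delta_a\delta_c
\]
is an injection $\mathbb{Z}^3\hookrightarrow\langle\delta_a,\delta_b,\delta_c\rangle$ with pairwise non-nested supports, yet each support is a two-component multi-disk. So the step ``any extra component of $D_v$ would force more than $N$ disjoint disks'' fails as stated; it is precisely the second clique $L$ that rules out such configurations, via the algebraic intersection above rather than a complement-count. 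Once you replace your complexity paragraph by the two-clique finite-index argument, the ``main obstacle'' you flag evaporates and the proof closes immediately.
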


Note that our all theorems also hold when we change handlebody groups to pure handlebody groups.
We also note that we cannot apply the argument of Kim-Koberda~\cite{Kim-Koberda13} for handlebody groups of high complexity handlebodies.
The methods in this paper are worthless for high complexity handlebodies.

\begin{prob}
When is the converse of Theorem~\ref{main_thm} true?
\end{prob}

\section{Preliminaries}
\subsection{Graph-theoretic terminology}
In this paper, a {\it graph} is a one-dimensional simplicial complex.
In particular, graphs have neither loops nor multi-edges.
For $X\subseteq V(\Gamma)$, the {\it subgraph} of $\Gamma$ {\it induced by} $X$ is the subgraph $\Lambda$ of $\Gamma$ defined by $V(\Lambda)=X$ and
\begin{align*}
E(\Lambda)=\{e\in E(\Gamma)\mid {\rm the~end~points~of}~e {\rm ~are~in~} X\}.
\end{align*}
In this case, we also say $\Lambda$ is an {\it induced subgraph} or a {\it full subgraph} of $\Gamma$.
A graph $\Gamma$ is $\Lambda$-{\it free} if no induced subgraphs of $\Gamma$ are isomorphic to $\Lambda$.
In particular, $\Gamma$ is {\it triangle}-{\it free} if no induced subgraphs of $\Gamma$ are triangles.
The {\it link} of $v$ in $\Gamma$ is the set of the vertices in $\Gamma$ which are adjacent to $v$, and denoted as ${\rm Link}(v)$.
The {\it star} of $v$ is the union of ${\rm Link}(v)$ and $\{v\}$, and denoted as ${\rm St}(v)$.
A {\it clique} is a subset of the vertex set which spans a complete subgraph.
By a link, a a star, or a clique, we often also mean the subgraphs induced by them.
For a positive integer $N$, we say $\Gamma$ has $N$-{\it thick stars} if each vertex $v$ of $\Gamma$ is contained in two cliques $K_{1}\cong K_{2}$ on $N$ vertices of $\Gamma$ whose intersection is exactly $v$.
Equivalently, ${\rm Link}(v)$ contains two disjoint copies of complete graphs on $N-1$ vertices of $\Gamma$ for each vertex $v$.

\subsection{Handlebodies}
A {\it handlebody} $H_{g}$ of genus $g$ is a compact orientable $3$-dimensional manifold constructed by attaching $g$ one-handles $D^{2}\times I$ to a $3$-ball, where $D^{2}$ is a $2$-disk and $I$ is an interval.
The boundary $\partial H_{g}$ of $H_{g}$ is a closed connected orientable surface $S_{g}$ of genus $g$.
A handlebody $H=H_{g, n}$ of genus $g$ with $n$ marked points is a handlebody of genus $g$, together with $n$ pairwise distinct points $p_{1}$, $p_{2}$, $\cdots$, $p_{n}$ on $\partial H_{g}$.
We regard the boundary $\partial H$ of $H$ as a compact connected orientable surface $S=S_{g, n}$ of genus $g$ with $n$ marked points.
A disk $d$ is {\it properly embedded} in $H$ if its boundary $\partial d$ is embedded in $\partial H$, and its interior is embedded in the interior of $H$.
A properly embedded disk $d$ is {\it essential} if the simple closed curve $\partial d$ is essential in $\partial H$, that is, $\partial d$ does not bound a disk in $\partial H$ or is not isotopic to a marked point on $\partial H$.
By a {\it disk} in $H$ we mean a properly embedded essential disk $(d, \partial d)\subseteq (H, \partial H)$.
A {\it disk twist} $\delta_{d}$ along a disk $d$ in $H$ is the homeomorphism defined by cutting $H$ along $d$, twisting one of the sides by $2\pi$ to the right, and gluing two sides of $d$ back to each other (see Figure~\ref{fig_disk_twist}).

\begin{figure}[h]
\includegraphics[scale=0.60]{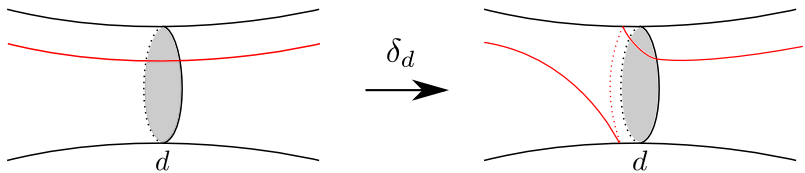}
\caption{A disk twist $\delta_{d}$ along a disk $d$ in $H$.}\label{fig_disk_twist}
\end{figure}

A {\it multi-disk} in $H$ is the union of a finite collection of disjoint disks in $H$.
The number of components of a multi-disk is at most $3g-3+n$.
A multi-disk is {\it maximal} if the number of its components is $3g-3+n$.

The {\it handlebody group} ${\rm Mod}(H)$ of $H$ is the group of orientation preserving homeomorphisms of $H$, fixing the marked points setwise, up to ambient isotopy.
The {\it mapping class group} ${\rm Mod}(S)$ of $S$ is the group defined by changing the role of homeomorphisms of $H$ into homeomorphisms of $S$ in the definition of the handlebody group.
The {\it pure handlebody group} ${\rm PMod}(H)$ of $H$ is the group of orientation preserving homeomorphisms of $H$, fixing the marked points pointwise, up to ambient isotopy.
We note that it is not important to distinguish between handlebody groups and pure handlebody groups in our considerations, since our all theorems hold for both handlebody groups and pure handlebody groups.
We call elements of ${\rm Mod}(H)$ or ${\rm Mod}(S)$ mapping classes.
An element $\Phi$ of ${\rm Mod}(H)$ is a {\it multi-disk twist} if $\Phi$ can be represented by a composition of powers of disk twists along disjoint pairwise-non-isotopic disks.

\begin{figure}[h]
\includegraphics[scale=0.50]{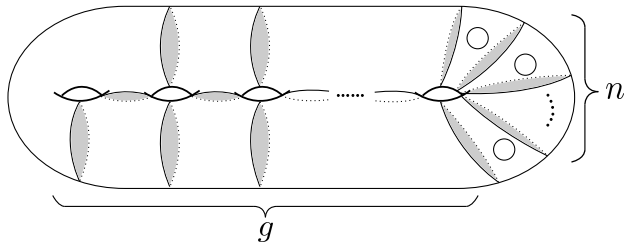}
\caption{An example of a maximal multi-disk in $H_{g, n}$.}\label{fig_multi-disk}
\end{figure}

An element $\Phi$ of ${\rm Mod}(S)$ is {\it pseudo}-{\it Anosov} if $\Phi^{n}(\alpha)\not=\alpha$ for any isotopy class $\alpha$ of simple closed curve on $S$ and $n\geq 1$ (see \cite{Birman-Lubotzky-McCarthy83}).
An element $\Phi$ of ${\rm Mod}(H)$ is {\it pseudo}-{\it Anosov} if its restriction $\Phi|_{\partial H}$ to $\partial H$ is a pseudo-Anosov element of ${\rm Mod}(\partial H)={\rm Mod}(S)$.
The {\it support} ${\rm supp}(\phi)$ of a homeomorphism $\phi$ of $H$ is defined by
\begin{align*}
{\rm supp}(\phi)=\overline{\{p\in H\mid \phi(p)\not=p\}}.
\end{align*}
Similarly, we define the support of a homeomorphism $\phi$ of $S$ as
\begin{align*}
{\rm supp}(\phi)=\overline{\{p\in S\mid \phi(p)\not=p\}}.
\end{align*}
The {\it disk graph} $\mathcal{D}(H)$ of $H$ is a graph whose vertex set is the set of isotopy classes of disks in $H$. Two vertices are adjacent if the corresponding isotopy classes admit disjoint representatives.
The {\it curve graph} $\mathcal{C}(S)$ of $S$ is a graph defined by changing the role of disks in the definition of the disk graph into properly embedded essential simple closed curves in $S$.
By a {\it curve} in $S$ we mean a properly embedded essential simple closed curve in $S$.
There exists a natural inclusion $\mathcal{D}(H)\rightarrow \mathcal{C}(S)$ given by sending an isotopy class of a disk $d$ to the isotopy class of a curve $\partial d$.
Slightly abusing the notation, we often realize isotopy classes of disks or curves as disks or curves.

\section{Proof of Theorem~\ref{main_thm}}

To prove Theorem~\ref{main_thm}, it is sufficient to show the following lemma.
\begin{lem}\label{lemma_of_main_thm}
Let $\Gamma$ be a finite graph and $H$ a handlebody.
Let $i$ be an embedding from $\Gamma$ to $\mathcal{D}(H)$ as an induced subgraph.
Then for all sufficiently large $N$, the map
\begin{align*}
i_{*, N}\colon A(\Gamma)\rightarrow {\rm Mod}(H)
\end{align*}
given by sending $v$ to the $N$th power $\delta_{i(v)}^{N}$ of a disk twist $\delta_{i(v)}$ along $i(v)$ is injective.
\end{lem}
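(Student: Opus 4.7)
The strategy is to reduce the lemma to Koberda's curve-graph statement (Proposition~\ref{main_Koberda12}) by pushing everything to the boundary surface $S=\partial H$ via the restriction homomorphism $r\colon {\rm Mod}(H)\rightarrow {\rm Mod}(S)$. First I would check that $i_{*, N}$ is well defined on $A(\Gamma)$: whenever $\{v_{i}, v_{j}\}\in E(\Gamma)$, the disks $i(v_{i})$ and $i(v_{j})$ are adjacent in $\mathcal{D}(H)$ and hence admit disjoint representatives, so the disk twists along them commute in ${\rm Mod}(H)$. Therefore the defining commutation relations of $A(\Gamma)$ are respected by the assignment $v\mapsto \delta_{i(v)}^{N}$ for every $N\geq 1$.

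For injectivity I would observe two geometric facts. First, restricting a disk twist $\delta_{d}$ to the boundary yields exactly the Dehn twist along the essential simple closed curve $\partial d$, so $r(\delta_{d}^{N})$ equals the $N$th power of this Dehn twist. Second, the map $\partial\colon \mathcal{D}(H)\rightarrow \mathcal{C}(S)$, $d\mapsto \partial d$, realizes $\mathcal{D}(H)$ as a full subgraph of $\mathcal{C}(S)$: an essential disk in a handlebody is determined up to isotopy by its boundary curve (Dehn's lemma together with the irreducibility of $H$), and if two essential disks have boundary curves that can be made disjoint on $\partial H$, then a standard innermost-disk and outermost-arc argument removes all intersections of their interiors inside $H$. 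Combined with the hypothesis $\Gamma\leq \mathcal{D}(H)$, this produces a full subgraph embedding $\Gamma\hookrightarrow \mathcal{C}(S)$ obtained by composing $i$ with $\partial$.

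Now Proposition~\ref{main_Koberda12}, in the explicit form proved in Koberda~\cite{Koberda12}, supplies an integer $N_{0}$ such that for every $N\geq N_{0}$ the homomorphism $A(\Gamma)\rightarrow {\rm Mod}(S)$ sending each $v$ to the $N$th power of the Dehn twist along $\partial i(v)$ is injective. By the paragraph above, this homomorphism coincides with $r\circ i_{*, N}$; hence injectivity of the composition forces injectivity of $i_{*, N}$, which completes the argument.

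The main obstacle is not algebraic but geometric: establishing the two 3-manifold facts that (i) $\mathcal{D}(H)$ sits inside $\mathcal{C}(S)$ as a full subgraph under the boundary map, and (ii) the boundary restriction of a disk twist is a Dehn twist. Both are classical, but they are the essential translation that lets one inherit Koberda's full technology (relative train tracks, normal form arguments on Dehn twist words, etc.) without having to redevelop any dynamical input for the disk-graph setting.
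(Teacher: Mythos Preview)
Your proposal is correct and follows essentially the same route as the paper: compose $i$ with the boundary map $\mathcal{D}(H)\hookrightarrow\mathcal{C}(S)$ to obtain a full-subgraph embedding $\Gamma\leq\mathcal{C}(S)$, apply Koberda's curve-graph result (Lemma~\ref{lemma_of_main_thm_Kim-Koberda13}) to get injectivity into ${\rm Mod}(S)$, and then observe that this injective map factors through $i_{*,N}$ via the restriction homomorphism since a disk twist restricts to the Dehn twist along its boundary. Your write-up is in fact more explicit than the paper's about well-definedness and about why the boundary map is a full-subgraph embedding, but the strategy is identical.
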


We use the following lemma to prove Lemma~\ref{lemma_of_main_thm}.
\begin{lem}{\rm(}\cite[Theorem 7 (1)]{Kim-Koberda13}{\rm)}\label{lemma_of_main_thm_Kim-Koberda13}
Let $\Gamma$ be a finite graph and $S$ an orientable surface.
Let $i'$ be an embedding from $\Gamma$ to $\mathcal{C}(S)$ as an induced subgraph.
Then for all sufficiently large $N$, the map
\begin{align*}
i'_{*, N}\colon A(\Gamma)\rightarrow {\rm Mod}(S)
\end{align*}
given by sending $v$ to the $N$th power $T_{i'(v)}^{N}$ of a Dehn twist $T_{i'(v)}$ along $i'(v)$ in $S$ is injective.
\end{lem}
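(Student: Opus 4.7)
The plan is to adapt Koberda's proof of his Theorem 1.1 in \cite{Koberda12}, which relies on Masur--Minsky subsurface projections to annuli. Well-definedness of $i'_{*,N}$ as a homomorphism is free: if $v$ and $w$ are adjacent in $\Gamma$, then $i'(v)$ and $i'(w)$ are disjoint in $S$, so $T_{i'(v)}$ and $T_{i'(w)}$ commute in ${\rm Mod}(S)$, and every defining relation of $A(\Gamma)$ is satisfied for any $N$. The work is in injectivity.

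To each $v \in V(\Gamma)$ I would associate the annular subsurface $A_v$ with core $i'(v)$. The annular curve graph $\mathcal{C}(A_v)$ is quasi-isometric to $\mathbb{Z}$, and $T_{i'(v)}$ acts on it by translation of length $1$. Fix a base curve $\alpha$ on $S$ that intersects every $i'(v)$ essentially, so that every annular projection $\pi_{A_v}(\alpha)$ is defined. Given a nontrivial $w \in A(\Gamma)$, put it into a Hermiller--Meier ``piling'' normal form, so that $w = x_1^{n_1}\cdots x_k^{n_k}$ with all $n_j \neq 0$ and with no two syllables at the same vertex mergeable via commutations. Using the commutation structure of $\Gamma$, reorder syllables past those based at adjacent vertices in order to isolate a leading syllable $x_{j_0}^{n_{j_0}}$ having the property that every subsequent syllable is based either at $x_{j_0}$ itself or at a vertex not adjacent to $x_{j_0}$.

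I would then estimate $d_{A_{x_{j_0}}}\!\bigl(\alpha, i'_{*,N}(w)\cdot\alpha\bigr)$ by a syllable-by-syllable accounting: syllables based at $x_{j_0}$ contribute $|Nn_j|$ to the annular projection; syllables at vertices adjacent to $x_{j_0}$ contribute $0$, since the corresponding Dehn twists fix $A_{x_{j_0}}$ setwise; the remaining syllables, whose curves cross $i'(x_{j_0})$, contribute at most a uniform constant $C_0$ controlled by the Behrstock inequality and the bounded geodesic image theorem. Summing contributions along the word gives a lower bound of the form $N - C(\Gamma,\alpha)$, so choosing $N > C(\Gamma,\alpha)$ forces $i'_{*,N}(w)\cdot\alpha \neq \alpha$, whence $i'_{*,N}(w)\neq 1$ in ${\rm Mod}(S)$.

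The principal obstacle is the RAAG combinatorics of isolating the leading syllable: because the commutation relations in $A(\Gamma)$ admit many equivalent normal forms, one must argue that the annular projection bound is independent of the chosen reduction and that the additive ``$x_{j_0}$-contributions'' cannot be cancelled by the bounded contributions from crossing syllables. This is the point at which the argument departs from the free-group ping-pong lemma, and it is settled by a careful induction on syllable length that exploits the adjacency structure of $\Gamma$ to shuffle commuting syllables past each other without altering the group element while preserving the estimate.
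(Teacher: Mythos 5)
The paper does not actually prove this lemma: it is quoted as \cite[Theorem 7 (1)]{Kim-Koberda13} and the ``proof'' in the text is only the pointer ``see \cite{Koberda12}.'' So you are reproving a cited theorem, and your outline follows the subsurface-projection route (annular projections, Behrstock inequality, bounded geodesic image) rather than Koberda's original argument via Nielsen--Thurston theory and ping-pong on laminations; that route is legitimate and is essentially how Clay--Leininger--Mangahas later recovered such statements. However, as written your sketch has two concrete problems, and the second is fatal to the argument as stated.

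First, the claimed normal-form reduction is false: it is not always possible to isolate a leading syllable $x_{j_0}^{n_{j_0}}$ such that every subsequent syllable lies at $x_{j_0}$ or at a vertex non-adjacent to $x_{j_0}$. Already for $\Gamma$ the path $a\,\text{--}\,b\,\text{--}\,c$ and $w=abc$, the only shufflings are $abc$, $acb$, $bac$, and in each one some later syllable sits at a vertex adjacent to the leading one. (This requirement is also in tension with your later remark that adjacent-vertex syllables contribute $0$; if that were the whole story you would not need to exclude them.) Second, and more seriously, the syllable-by-syllable accounting does not give the bound you claim. Signed twisting contributions at $A_{x_{j_0}}$ can cancel, and each syllable whose curve crosses $i'(x_{j_0})$ can move the projection by a definite constant $C_0$; summing over the word therefore yields a lower bound of the shape $N - C_0\cdot(\text{length of }w)$, not $N - C(\Gamma,\alpha)$. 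Since $N$ must be chosen uniformly, before $w$ is given, this estimate proves nothing. The entire content of the theorem is the mechanism that prevents this linear accumulation --- the Behrstock inequality forces that once a projection to one annulus is large, subsequent twists about crossing curves cannot push it back below a uniform threshold, and one must run an induction over the syllables in which the ``active'' annulus changes. You name this as the ``principal obstacle'' and assert it is ``settled by a careful induction,'' but that induction is the proof; without it the argument is a restatement of the goal rather than a derivation of it.
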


\begin{proof}[Proof of Lemma~\ref{lemma_of_main_thm_Kim-Koberda13}]
For the proof, see \cite{Koberda12}.
\end{proof}

\begin{proof}[Proof of Lemma~\ref{lemma_of_main_thm}]
Let $i\colon \Gamma\rightarrow \mathcal{D}(H)$ be an embedding as an induced subgraph.
Recall that $\mathcal{D}(H)$ is a subgraph of $\mathcal{C}(S)$, and so there is a natural embedding $j\colon \mathcal{D}(H)\rightarrow \mathcal{C}(S)$ given by sending a disk $d$ to the boundary circle $\partial d$.
We set $i'=j\circ i$.
Then $i'$ is an embedding of $\Gamma$ into $\mathcal{C}(S)$ as an induced subgraph.
By Lemma~\ref{lemma_of_main_thm_Kim-Koberda13}, there is a sufficiently large $N>0$ such that the map $i'_{*, N}\colon A(\Gamma)\rightarrow {\rm Mod}(S)$ given by sending $v\in V(\Gamma)$ to $T_{i'(v)}^{N}$ is injective.
Since $i'(v)=j\circ i(v)=\partial (i(v))$, a Dehn twist $T_{i'(v)}$ along $i'(v)$ is extended to a disk twist $\delta_{i(v)}$ along $i(v)$ in $H$.
Therefore, the map $i_{*, N}\colon A(\Gamma)\rightarrow {\rm Mod}(H)$ given by sending $v\in V(\Gamma)$ to $\delta_{i(v)}^N$ is injective.
\end{proof}

From Lemma~\ref{lemma_of_main_thm}, $A(\Gamma)$ is a subgroup of ${\rm Mod}(H)$, and we have finished a proof of Theorem~\ref{main_thm}.

\section{Proof of Theorem~\ref{second_thm}}
The idea of the proof of Theorem~\ref{second_thm} comes from the proof of \cite[Theorem 3]{Kim-Koberda13} by changing the assumptions of mapping class groups and curve graphs to handlebody groups and disk graphs.
However, for $H=H_{1, 0}$ and $H=H_{1, 1}$ we can not apply their argument and we prove it by another way.
First we remark the following.

\begin{rem}\label{remark_of_standard_embedding}
In Definition~\ref{standard_embeding}, if supp($f$) is a maximal clique in $\mathcal{D}(H)$, then the condition (ii) implies that $v$ is an isolated vertex.
\end{rem}

\begin{proof}[Proof of Theorem~\ref{second_thm}]
First we consider the case $\xi(H)=0$, that is, $g=0$ and $n\leq 3$, or $g=1$ and $n=0$.
If $g=1$ and $n=0$, then there exists one disk in $H_{1, 0}$.
Thus, this case comes down to the case $\xi(H)=1$.
We may assume that $g=0$ and $n\leq 3$.
Then the handlebody groups are trivial.
Note that there is no essential disk in $H$.
We assume that $A(\Gamma)$ is a subgroup of ${\rm Mod}(H)$. 
Then $A(\Gamma)$ is also trivial.
Therefore $A(\Gamma)$ has no generator, and so $\Gamma$ has no vertex.
Hence $\Gamma\leq \mathcal{D}(H)$.

Suppose that $\xi(H)=1$, that is, $H=H_{0, 4}$, $H=H_{1, 0}$, or $H=H_{1, 1}$.
First, we assume $H=H_{0, 4}$.
Note that $\mathcal{D}(H)$ is an infinite union of isolated vertices.
${\rm Mod}(H)$ is virtually free, since ${\rm Mod}(S)$ is virtually free and subgroups of virtually free groups are also virtually free.
We assume that $A(\Gamma)$ is a subgroup of ${\rm Mod}(H)$. 
Then $A(\Gamma)$ is free because it is virtually free and torsion-free.
Hence, $A(\Gamma)$ has no relation, and so $\Gamma$ is a graph consists of finite isolated vertices.
Therefore $\Gamma\leq \mathcal{D}(H)$.
Secondly, we assume $H=H_{1, 0}$, or $H=H_{1, 1}$.
We note that there is only one essential disk in $H$ (see Hamenst\"{a}dt-Hensel~\cite[Section 2]{Hamenstadt-Hensel12}).
We call the disk $d$.
Hence, $\mathcal{D}(H)$ is a graph consists of a single vertex.
On the other hand, ${\rm Mod}(H)\cong \langle \delta_{d}\rangle\cong \mathbb{Z}$, where $\langle \delta_{d}\rangle$ is the group generated by a disk twist $\delta_{d}$ along $d$ (see Hamenst\"{a}dt-Hensel~\cite[Proposition 2.2]{Hamenstadt-Hensel12}).
We assume that $A(\Gamma)$ is a subgroup of ${\rm Mod}(H)$.
Then, $A(\Gamma)$ is trivial or $\mathbb{Z}$ because any non-trivial subgroup of $\mathbb{Z}$ is isomorphic to $\mathbb{Z}$.
If $A(\Gamma)$ is trivial, then $\Gamma$ has no vertex.
Hence $\Gamma\leq \mathcal{D}(H)$.
If $A(\Gamma)$ is isomorphic to $\mathbb{Z}$, then $\Gamma$ is a graph consists of a single vertex.
Therefore $\Gamma\leq \mathcal{D}(H)$.

Suppose that $\xi(H)=2$, that is, $H=H_{0, 5}$ or $H=H_{1, 2}$.
We note that $\mathcal{D}(H)$ is triangle-free.
First, we claim that the conclusion of the theorem holds for $\Gamma$ if and only if it holds for each connected component of $\Gamma$.
This is an easy consequence of the fact that $\mathcal{D}(H)$ has infinite diameter and that there exists a pseudo-Anosov homeomorphism on $H$.
So, we may assume that $\Gamma$ is connected, and so it has at least one edge.
By the hypothesis, there is a standard embedding $f$ from $A(\Gamma)$ to ${\rm Mod}(H)$.
Each vertex $v$ of $A(\Gamma)$ is mapped to a power of a single disk twist $\delta_{d}$ along $d$ by $f$, since $\Gamma$ has no isolated vertex and $\mathcal{D}(H)$ is triangle-free (see Remark~\ref{remark_of_standard_embedding}).
Hence we gain an embedding $\Gamma\rightarrow \mathcal{D}(H)$.
\end{proof}

\section{Proof of Theorem~\ref{third_thm}}\label{Proof of third theorem}

Let $\Gamma_{0}$ and $\Gamma_{1}$ be the finite graphs shown in Figure~\ref{fig_Gamma_0_and_Gamma_1}.
We denote by $C_{4}$ the 4-cycle spanned by $\{a, b, c, d\}$.

\begin{figure}[h]
\includegraphics[scale=0.35]{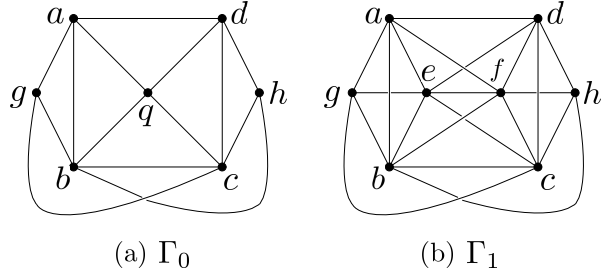}
\caption{Two graphs $\Gamma_{0}$ and $\Gamma_{1}$.}\label{fig_Gamma_0_and_Gamma_1}
\end{figure}

Let $\phi\colon A(\Gamma_{0})\rightarrow A(\Gamma_{1})$ be the map defined by $\phi(q)=ef$ and $\phi(v)=v$ for any $v\in V(\Gamma_{0})-\{q\}$.
For a graph $\Gamma$, we will denote by $\langle v\rangle$ the subgroup of $A(\Gamma)$ generated by $v\in V(\Gamma)$.

\begin{lem}{\rm(}\cite[Lemma 11]{Kim-Koberda13}{\rm)}\label{first_lemma_of_third_thm_Kim-Koberda13}
The map $\phi\colon A(\Gamma_{0})\rightarrow A(\Gamma_{1})$ is injective.
\end{lem}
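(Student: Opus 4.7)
The plan is to realize $\phi$ as coming from an inclusion of graph products in which only the vertex group at $q$ changes, and then to identify the target with $A(\Gamma_1)$. First I would check that $\phi$ is a well-defined homomorphism. The only defining relations of $A(\Gamma_0)$ that involve $q$ are the commutators $[q,v]=1$ for $v\in\mathrm{Link}_{\Gamma_0}(q)$, so I need $[ef,v]=1$ in $A(\Gamma_1)$ for each such $v$; this reduces to checking from Figure~\ref{fig_Gamma_0_and_Gamma_1} that $v$ is adjacent to both $e$ and $f$ in $\Gamma_1$. Relations among the vertices of $V(\Gamma_0)-\{q\}$ are preserved because that induced subgraph is assumed to sit as a full subgraph inside $\Gamma_1-\{e,f\}$.

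For injectivity, I would view $A(\Gamma_0)$ as the graph product (in the sense of Green) over $\Gamma_0$ of infinite cyclic vertex groups $\langle v\rangle\cong\mathbb{Z}$. Replacing the vertex group at $q$ by $\mathbb{Z}^2=\langle e,f\mid[e,f]=1\rangle$ while keeping all other vertex groups unchanged yields a new graph product $G$ over $\Gamma_0$. The diagonal inclusion $\mathbb{Z}\hookrightarrow\mathbb{Z}^2$, $q\mapsto ef$, together with identity maps on the other vertex groups, extends to a homomorphism $A(\Gamma_0)\to G$. Since each vertex-group map is injective, the induced map of graph products is injective by Green's extension theorem (a consequence of the normal form theorem for graph products). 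Under the structural hypothesis on $\Gamma_1$ — that $e$ and $f$ are joined by an edge, that their common link equals $\mathrm{Link}_{\Gamma_0}(q)$, and that $\Gamma_1-\{e,f\}=\Gamma_0-\{q\}$ as labelled induced subgraphs — the presentation of $G$ coincides with the standard presentation of $A(\Gamma_1)$. Thus $G\cong A(\Gamma_1)$, and under this identification the induced embedding is exactly $\phi$.

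The main obstacle is the appeal to Green's theorem: extending injections on vertex groups must yield an injection of graph products. If one prefers a self-contained argument in place of this citation, one can work directly with Servatius-style piecewise-normal forms. Every nontrivial element of $A(\Gamma_0)$ has a unique reduced expression as an alternating product of nonzero powers $q^{n_i}$ and reduced factors in $A(\Gamma_0-\{q\})$, with commutation allowed only through $\mathrm{Link}_{\Gamma_0}(q)$. Applying $\phi$ replaces each syllable $q^{n_i}$ by $e^{n_i}f^{n_i}$, and the resulting word in $A(\Gamma_1)$ must be shown to remain reduced, since $e$ and $f$ commute only with each other and with the vertices of $\mathrm{Link}_{\Gamma_0}(q)$, so no new commutation moves or cancellations become available that were not already present in $A(\Gamma_0)$. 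This reduction-preserving step is the one that requires care; everything else is routine bookkeeping about graphs and vertex links.
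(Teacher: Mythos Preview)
Your argument has a genuine gap at the identification $G\cong A(\Gamma_1)$. The graph product $G$ you build has $e$ and $f$ commuting with each other and with exactly the vertices of $\mathrm{Link}_{\Gamma_0}(q)=\{a,b,c,d\}$. But in $\Gamma_1$ (as drawn in Figure~\ref{fig_Gamma_0_and_Gamma_1} and as used in the proofs of Lemma~\ref{second_lemma_of_third_theorem} and Proposition~\ref{last_proposition}) one has in addition $e$ adjacent to $g$ and $f$ adjacent to $h$. Thus $A(\Gamma_1)$ satisfies the extra relations $[e,g]=1$ and $[f,h]=1$, which are \emph{not} present in $G$; so $A(\Gamma_1)$ is a proper quotient of $G$, not isomorphic to it. Your injection $A(\Gamma_0)\hookrightarrow G$ from Green's theorem is fine, but it only gives you $A(\Gamma_0)\hookrightarrow G\twoheadrightarrow A(\Gamma_1)$, and the composite being injective is precisely the content of the lemma.

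The same issue undermines your fallback normal-form sketch: when $q^{n}$ is replaced by $e^{n}f^{n}$, new commutation moves \emph{do} become available in $A(\Gamma_1)$, since $e$ can slide past $g$'s and $f$ past $h$'s. Showing that these extra shuffles never produce a cancellation is exactly the nontrivial part. The paper itself does not supply an argument here; it simply cites \cite[Lemma~11]{Kim-Koberda13}, whose proof handles this via Kim's co-contraction embedding theorem \cite{Kim08} (the pair $\{e,f\}$ is an ``anti-edge'' being contracted to $q$) rather than by a graph-product/vertex-group trick. If you want a self-contained proof, you will need either that co-contraction machinery or a careful ping-pong/normal-form argument that explicitly controls the interaction of $e$ with $g$ and of $f$ with $h$.
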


\begin{proof}
For the proof see the proof of Lemma~\cite[Lemma 11]{Kim-Koberda13}.
\end{proof}

\begin{figure}[h]
\includegraphics[scale=0.7]{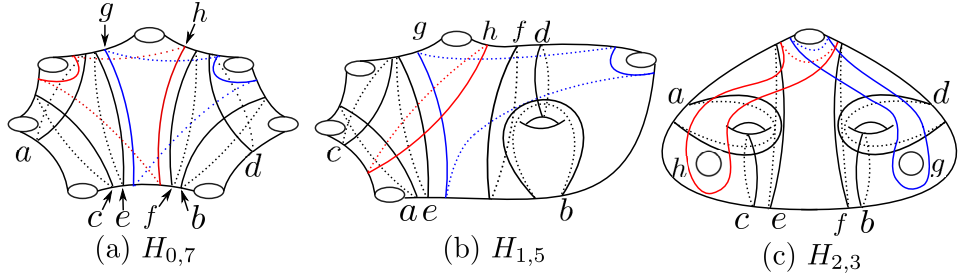}
\caption{Handlebodies $H_{0, 7}$, $H_{1, 5}$, and $H_{2, 3}$.}\label{fig_three_handlebodies}
\end{figure}

\begin{lem}\label{embedding_Gamma_{1}_into_disk_graphs}
The graph $\Gamma_{1}$ is embedded into $\mathcal{D}(H)$ if and only if $H$ is a handlebody with $\xi(H)\geq 6$, $H=H_{0, 7}$, or $H=H_{1, 5}$.
\end{lem}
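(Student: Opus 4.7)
The plan is to prove the two implications separately, using the specific structure of $\Gamma_1$ from Figure~\ref{fig_Gamma_0_and_Gamma_1}: it consists of the induced $4$-cycle $C_4$ on vertices $\{a,b,c,d\}$ together with two extra vertices $e,f$ that are each adjacent to the same neighbours in $C_4$ as the vertex $q$ of $\Gamma_0$ (so that both $e$ and $f$ join to every vertex of $C_4$ in the wheel-like setup), and that are not adjacent to one another. Thus embedding $\Gamma_1$ into $\mathcal{D}(H)$ amounts to producing six disks in $H$, four of them arranged as an induced $C_4$ in $\mathcal{D}(H)$, together with two further disks each disjoint from all four $C_4$-disks but transverse to each other.

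For the ``if'' direction, I would first exhibit explicit configurations for the three critical cases $H_{0,7}$, $H_{1,5}$, and $H_{2,3}$ shown in Figure~\ref{fig_three_handlebodies}. In $H_{0,7}$ a disk is determined by a partition of the seven marked points into two subsets of size at least two, and there is enough combinatorial room to select six disks realising the required adjacencies. The cases $H_{1,5}$ and $H_{2,3}$ are handled similarly using meridian disks and separating disks. For a general handlebody $H$ with $\xi(H)\geq 6$, I would observe that $H$ contains an embedded sub-handlebody homeomorphic to one of the model handlebodies $H_{0,9}$, $H_{1,6}$, $H_{2,3}$, or $H_{3,0}$; the six disks produced in the model remain essential, pairwise non-isotopic, and with the correct intersection pattern once viewed inside $H$.

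For the ``only if'' direction I would argue by contrapositive and eliminate each handlebody $H$ with $\xi(H)\leq 5$ other than $H_{0,7}$ and $H_{1,5}$. The cases $\xi(H)\leq 2$ are immediate since $\mathcal{D}(H)$ is triangle-free there (as noted in the proof of Theorem~\ref{second_thm}), whereas $\Gamma_1$ contains triangles such as $\{a,b,e\}$. The remaining handlebodies to rule out are $H_{0,6}$, $H_{1,3}$, $H_{2,0}$ (complexity $3$), $H_{1,4}$, $H_{2,1}$ (complexity $4$), and $H_{0,8}$, $H_{2,2}$ (complexity $5$). For each, I would fix a hypothetical induced $C_4=\{a,b,c,d\}$ in $\mathcal{D}(H)$, cut $H$ along representative disks for $a,b,c,d$, and examine the resulting collection of sub-handlebodies. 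The disks $e,f$ must lie in these pieces (since they are disjoint from each of $a,b,c,d$), and the key step is to show that no piece simultaneously supports two non-isotopic essential disks that intersect each other.

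The main obstacle will be the detailed case analysis of $H_{1,4}$, $H_{2,1}$, $H_{0,8}$, and $H_{2,2}$, where the total complexity is close to that of the permitted $H_{0,7}$ and $H_{1,5}$ but the distribution between genus and marked points is unfavourable. I anticipate that the crux of the argument is, for each of these handlebodies, a classification up to the action of ${\rm Mod}(H)$ of the induced $4$-cycles in $\mathcal{D}(H)$, followed by a complexity bookkeeping on the pieces obtained from cutting along a $C_4$ that shows no two essential non-isotopic disks in the complement of $a,b,c,d$ can intersect essentially.
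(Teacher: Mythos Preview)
Your proposal rests on a misreading of $\Gamma_{1}$. The graph $\Gamma_{1}$ has \emph{eight} vertices $a,b,c,d,e,f,g,h$, not six: the map $\phi\colon A(\Gamma_{0})\to A(\Gamma_{1})$ sends $q\mapsto ef$ and fixes the remaining vertices $a,b,c,d,g,h$ of $\Gamma_{0}$, so all of $a,b,c,d,g,h$ must already be vertices of $\Gamma_{1}$. Moreover $e$ and $f$ \emph{are} adjacent in $\Gamma_{1}$ (the paper records $e\cap f=\emptyset$ in the Appendix), and the vertices $g,h$ carry most of the obstruction: one has $g\cap h\neq\emptyset$, $g\cap S_{1}=h\cap S_{2}=\emptyset$, $g\cap f\neq\emptyset$, $h\cap e\neq\emptyset$. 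Your six-vertex ``wheel with two hubs'' is a different graph, and the argument you sketch (find two intersecting disks in the complement of a $C_{4}$) proves nothing about the actual $\Gamma_{1}$. In particular, $\Gamma_{1}$ contains a clique of size four (e.g.\ $\{a,b,e,f\}$), which is what immediately kills all handlebodies with $\xi(H)\le 3$; your triangle argument handles only $\xi(H)\le 2$ and leaves the complexity-$3$ cases without a mechanism.

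Even with the correct $\Gamma_{1}$, the plan to ``cut $H$ along $a,b,c,d$'' is not quite the paper's decomposition. The paper takes $S_{1}$ to be a regular neighbourhood of $\partial a\cup\partial c$ and $S_{2}$ a regular neighbourhood of $\partial b\cup\partial d$ on $\partial H$, sets $S_{0}=\overline{\partial H\setminus(S_{1}\cup S_{2})}$, and then runs the $17$-case and $5$-case enumerations of Lemmas~\ref{17_cases _of_complexity_five_handlebodies} and~\ref{five_cases_of complexity_four_handlebodies} for $\xi(H)=5$ and $\xi(H)=4$ respectively; the contradictions then come from tracking where $\partial e,\partial f,\partial g,\partial h$ can sit relative to $S_{0},S_{1},S_{2}$ (see the Appendix). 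Finally, note that you list $H_{0,8}$ among the handlebodies to be excluded, but since $\Gamma_{1}\le\mathcal{D}(H_{0,7})$ one also has $\Gamma_{1}\le\mathcal{D}(H_{0,8})$ by adding a marked point; the Appendix in fact confirms this, so any attempt to ``rule out'' $H_{0,8}$ will necessarily fail.
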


We introduce the intersection number between two disks $d_{1}$ and $d_{2}$ in $H$ as follows.
\begin{align*}
i(d_{1}, d_{2})=\mid\partial d_{1}\cap \partial d_{2}\mid.
\end{align*}
Two disks $d_{1}$ and $d_{2}$ in $H$ are in {\it minimal position} if the intersection number is minimal in the isotopy classes of $d_{1}$ and $d_{2}$.
Note that two disks in $H$ intersect if and only if the boundary circles intersect in $\partial H$.
We also remark that two curves are in minimal position in $S$ if and only if they do not bound any bigons on $S$.

\begin{proof}[Proof of Lemma~\ref{embedding_Gamma_{1}_into_disk_graphs}]
We show $\Gamma_{1}$ is embedded into $\mathcal{D}(H_{0, 7})$, $\mathcal{D}(H_{1, 5})$, and $\mathcal{D}(H_{2, 3})$ as an induced subgraph.
Note that the complexities of $H_{0, 7}$, $H_{1, 5}$, and $H_{2, 3}$ are four, five, and six respectively.
We put disks $a$, $b$, $c$, $d$, $e$, $f$, $g$, and $h$ in the handlebodies as in Figure~\ref{fig_three_handlebodies} so that they form the graph $\Gamma_{1}$ in the disk graphs.
One can verify that the disks are in minimal position, since their boundary circles do not bound bigons on the boundary surfaces.
Hence $\Gamma_{1}$ is embedded into $\mathcal{D}(H_{0, 7})$, $\mathcal{D}(H_{1, 5})$, and $\mathcal{D}(H_{2, 3})$ as an induced subgraph.
Therefore $\Gamma_{1}$ is also embedded into a disk graph of a handlebody whose complexity is at least six.
We can also show that $\Gamma_{1}$ is not embedded into any other disk graphs (see Section~\ref{appendix} for the proof).
We have thus proved the lemma.
\end{proof}

Let $H$ be a handlebody with $\xi(H)=4$ or $\xi(H)=5$.
Suppose \{$a$, $b$, $c$, $d$\} are disks in $H$ which form a four cycle $C_{4}$ in $\mathcal{D}(H)$ with this order.
Let $S_{1}$ be a regular neighborhood of $\partial a$ and $\partial c$ in $\partial H$, and $S_{2}$ a regular neighborhood of $\partial b$ and $\partial d$ in $\partial H$ so that $S_{1}\cap S_{2}=\emptyset$.
Set $S_{0}=\overline{\partial H-(S_{1}\cup S_{2})}$.
Note that we regard the boundaries of $S_{0}$, $S_{1}$, and $S_{2}$ as marked points from now.

\begin{lem}\label{17_cases _of_complexity_five_handlebodies}
Let $H$ be a handlebody with $\xi(H)=5$.
Then, the triple $(S_{0}, S_{1}, S_{2})$ satisfies exactly one of the following seventeen cases, possibly after switching the roles of $S_{1}$ and $S_{2}$.

\begin{itemize}
\item[{\rm (1)}] $(S_{1}, S_{2}) \in \{(S_{0, 4}, S_{0, 6}), (S_{0, 4}, S_{1, 3}), (S_{0, 5}, S_{0, 5}), (S_{0, 5}, S_{1, 2}), (S_{1, 2}, S_{1, 2})\}$, $S_{0}\approx S_{0,2}$, and $S_{0}$ intersects both $S_{1}$ and $S_{2}$.

\item[{\rm (2)}] $(S_{1}, S_{2}) \in \{(S_{0, 4}, S_{0, 5}), (S_{0, 4}, S_{1, 2})\}$, $S_{0}\approx S_{0, 3}$, and $S_{0}$ intersects each of $S_{1}$ and $S_{2}$ at only one boundary component.

\item[{\rm (3)}] $S_{1}\approx S_{0, 4}$, $S_{2}\in \{S_{0, 5}, S_{1, 2}\}$, $S_{0}\approx S_{0, 2}\coprod S_{0, 2}$, and each component of $S_{0}$ intersects both $S_{1}$ and $S_{2}$.

\item[{\rm (4)}] $S_{1}\approx S_{0, 4}$, $S_{2}\in \{S_{0, 5}, S_{1, 2}\}$, $S_{0}\approx S_{0, 2}\coprod S_{0, 2}$, and one component of $S_{0}$ intersects each of $S_{1}$ and $S_{2}$ at only one boundary component, while the other component of $S_{0}$ intersects $S_{1}$ at just two boundary components.

\item[{\rm (5)}] $S_{1}\approx S_{0, 4}$, $S_{2}\in \{S_{0, 5}, S_{1, 2}\}$, $S_{0}\approx S_{0, 2}\coprod S_{0, 3}$ such that the $S_{0, 2}$ component intersects both $S_{1}$ and $S_{2}$ and the $S_{0, 3}$ component is disjoint from $S_{2}$, and moreover, $S_{0, 3}\cap S_{1}\approx S^{1}$.

\item[{\rm (6)}] $S_{1}, S_{2}\approx S_{0, 4}$, $S_{0}\approx S_{0, 4}$, and $S_{0}$ intersects each of $S_{1}$ and $S_{2}$ at only one boundary component.

\item[{\rm (7)}] $S_{1}, S_{2}\approx S_{0, 4}$, $S_{0}\in \{S_{0, 2}\coprod S_{0, 4}, S_{0, 2}\coprod S_{1, 1}\}$ such that the $S_{0, 2}$ component intersects both $S_{1}$ and $S_{2}$ and the $S_{0, 4}$ {\rm (}resp. $S_{1, 1}${\rm )} component is disjoint from $S_{2}$, and moreover, $S_{0, 4}\cap S_{1}\approx S^{1}$ {\rm (}resp. $S_{1, 1}\cap S_{1}\approx S^{1}${\rm )}.

\item[{\rm (8)}] $S_{1}, S_{2}\approx S_{0, 4}$, $S_{0}\approx S_{0, 3}$, and $S_{0}\cap S_{1}\approx S^{1}\coprod S^{1}$ and $S_{0}\cap S_{2}\approx S^{1}$.

\item[{\rm (9)}] $S_{1}, S_{2}\approx S_{0, 4}$, $S_{0}\approx S_{0, 2}\coprod S_{0, 3}$, and both components of $S_{0}$ intersects each of $S_{1}$ and $S_{2}$ at only one boundary component respectively.

\item[{\rm (10)}] $S_{1}, S_{2}\approx S_{0, 4}$, $S_{0}\approx S_{0, 2}\coprod S_{0, 3}$ such that the $S_{0, 2}$ component intersects both $S_{1}$ and $S_{2}$ and the $S_{0, 3}$ component is disjoint from $S_{2}$, and moreover, $S_{0, 3}\cap S_{1}\approx S^{1}$.

\item[{\rm (11)}] $S_{1}, S_{2}\approx S_{0, 4}$, $S_{0}\approx S_{0, 2}\coprod S_{0, 3}$ such that the $S_{0, 3}$ component intersects both $S_{1}$ and $S_{2}$ and the $S_{0, 2}$ component is disjoint from $S_{2}$, and moreover, $S_{0, 2}\cap S_{1}\approx S^{1}\coprod S^{1}$.

\item[{\rm (12)}] $S_{1}, S_{2}\approx S_{0, 4}$, $S_{0}\approx S_{0, 2}\coprod S_{0, 2}\coprod S_{0, 2}$, and three components of $S_{0}$ intersect both $S_{1}$ and $S_{2}$.

\item[{\rm (13)}] $S_{1}, S_{2}\approx S_{0, 4}$, $S_{0}\approx S_{0, 2}\coprod S_{0, 2}\coprod S_{0, 2}$, and two components of $S_{0}$ intersect both $S_{1}$ and $S_{2}$, while the other component of $S_{0}$ is disjoint from $S_{2}$ and intersects $S_{1}$ at just two boundary components.

\item[{\rm (14)}] $S_{1}, S_{2}\approx S_{0, 4}$, $S_{0}\approx S_{0, 2}\coprod S_{0, 2}\coprod S_{0, 2}$ such that one component of $S_{0}$ intersects both $S_{1}$ and $S_{2}$, one {\rm (}named $I${\rm )} of the other components is disjoint from $S_{2}$, the other component {\rm (}named $J${\rm )} is disjoint from $S_{1}$, and $I\cap S_{1}\approx S^{1}\coprod S^{1}$ and $J\cap S_{2}\approx S^{1}\coprod S^{1}$.

\item[{\rm (15)}] $S_{1}, S_{2}\approx S_{0, 4}$, $S_{0}\approx S_{0, 2}\coprod S_{0, 2}\coprod S_{0, 3}$ such that two $S_{0, 2}$ components intersect both $S_{1}$ and $S_{2}$ and the $S_{0, 3}$ component is disjoint from $S_{2}$, and moreover, $S_{0, 3}\cap S_{1}\approx S^{1}$.

\item[{\rm (16)}] $S_{1}, S_{2}\approx S_{0, 4}$, $S_{0}\approx S_{0, 2}\coprod S_{0, 2}\coprod S_{0, 3}$, such that one $S_{0, 2}$ component intersects both $S_{1}$ and $S_{2}$, the other $S_{0, 2}$ component {\rm (}named $I${\rm )} is disjoint from $S_{2}$, the $S_{0, 3}$ component is disjoint from $S_{1}$ or $S_{2}$ {\rm (}here we suppose that $S_{0, 3}$ is disjoint from $S_{2}${\rm )}, and moreover, $I\cap S_{1}\approx S^{1}\coprod S^{1}$ and $S_{0, 3}\cap S_{1}\approx S^{1}$.

\item[{\rm (17)}] $S_{1}, S_{2}\approx S_{0, 4}$, $S_{0}\approx S_{0, 2}\coprod S_{0, 3}\coprod S_{0, 3}$ such that the $S_{0, 2}$ component intersects both $S_{1}$ and $S_{2}$ and each $S_{0, 3}$ component is disjoint from $S_{1}$ or $S_{2}$ respectively {\rm (}here we suppose that both $S_{0, 3}$ components are disjoint from $S_{2}${\rm )}, and moreover, each $S_{0, 3}$ component intersects $S_{1}$ at only one boundary component.
\end{itemize}
\end{lem}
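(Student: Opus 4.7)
The plan is a case analysis organized by the homeomorphism types of $S_{1}$, $S_{2}$, and $S_{0}$, where each piece is treated as a surface with marked points (boundary circles counted as marks, as stipulated in the statement). The starting observation is that, since $a$ and $c$ are non-adjacent in the $4$-cycle $C_{4}$, their boundary curves $\partial a$ and $\partial c$ cannot be made disjoint on $\partial H = S$, so in minimal position they meet at some $k_{1} \geq 1$ points; the regular neighborhood $S_{1}$ deformation retracts onto $\partial a \cup \partial c$, whence $\chi(S_{1}) = -k_{1}$. Analogously $\chi(S_{2}) = -k_{2}$ with $k_{2} \geq 1$. Because two essential simple closed curves fill $S_{1}$, its topological type is restricted to the short list $S_{1,1}$ (for $k_{1} = 1$), $S_{0,4}$ or $S_{1,2}$ (for $k_{1} = 2$), $S_{0,5}$ or $S_{1,3}$ (for $k_{1} = 3$), and so on, and similarly for $S_{2}$.

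Next I would combine this with the Euler characteristic identity $\chi(S_{0}) + \chi(S_{1}) + \chi(S_{2}) = \chi(S)$. For $\xi(H) = 5$ one has $(g,n) \in \{(0,8), (1,5), (2,2)\}$ with $\chi(S) \in \{-6,-5,-4\}$. Each component of $S_{0}$ is a surface with marked points satisfying $\chi \leq 0$ (no component can be a disk, as every curve of $\partial S_{1} \cup \partial S_{2}$ is essential in $S$). These inequalities bound $\chi(S_{1}) + \chi(S_{2})$ from below and eliminate all but a finite list of pairs $(S_{1}, S_{2})$, namely those occurring across the seventeen cases. For each admissible pair, I would enumerate the homeomorphism types of $S_{0}$ compatible with $\chi(S_{0}) = \chi(S) - \chi(S_{1}) - \chi(S_{2})$, allowing $S_{0}$ to be disconnected, and using a bookkeeping argument on how the marks in each component of $S_{0}$ split between original marked points of $\partial H$ and boundary circles shared with $\partial S_{1}$ or $\partial S_{2}$.

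For each compatible triple up to homeomorphism, the remaining data is combinatorial: how the marked points of each component of $S_{0}$ distribute among the original marks and the circles of $\partial S_{1}, \partial S_{2}$. I would enumerate these attaching patterns, discard those that cannot be realized as a genuine decomposition of $\partial H$, and keep track of the symmetry that swaps $S_{1}$ and $S_{2}$ to avoid double-counting. The main obstacle will be the bookkeeping in the subcases where $S_{1} \approx S_{2} \approx S_{0,4}$; there, cases (6)--(17) are distinguished by the fine adjacency pattern of the components of $S_{0}$ with $\partial S_{1}$ and $\partial S_{2}$, and the cleanest route is to enumerate systematically by the number of connected components of $S_{0}$ and, within each, by the distribution of their boundary circles between $S_{1}$ and $S_{2}$. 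Verifying realizability in $\partial H$ for each surviving abstract configuration (rather than merely consistency of Euler characteristics and incidence data) is what makes the enumeration nontrivial.
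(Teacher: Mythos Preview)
Your plan is close in spirit to the paper's argument---both are case analyses organized by the topological types of $S_{1}$, $S_{2}$, and $S_{0}$---but there is one handlebody-specific step you are missing, and it is essential for arriving at exactly the seventeen cases listed rather than a longer list. When you classify the possible homeomorphism types of $S_{1}$ (and likewise $S_{2}$), you allow $S_{1,1}$ when $k_{1}=1$. In the pure surface setting that would be correct, but here $a$ and $c$ are not merely essential curves on $\partial H$: they are boundaries of \emph{disks} in $H$. The paper exploits this by observing that if $S_{1}\approx S_{1,1}$, then the two disk-bounding curves $\partial a$, $\partial c$ lying in it would give two distinct isotopy classes of meridian disks in a genus-one piece, whereas $H_{1,1}$ admits only a single essential disk up to isotopy. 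This rules out $S_{1,1}$ as a type for $S_{1}$ or $S_{2}$; without this exclusion your enumeration would produce spurious configurations (for instance $S_{1}\approx S_{1,1}$, $S_{2}\approx S_{0,4}$) not among the seventeen. Your proposed ``realizability in $\partial H$'' check will not catch this, since such a decomposition of the surface $\partial H$ is perfectly realizable; the obstruction lives in the $3$-dimensional handlebody, not on its boundary.

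A secondary remark, not a gap but a simplification you are missing: working with Euler characteristic forces you to treat the three ambient types $H_{0,8}$, $H_{1,5}$, $H_{2,2}$ separately, since $\chi(\partial H)\in\{-6,-5,-4\}$ varies among them. The paper instead uses the complexity identity
\[
\xi(\partial H)=\xi(S_{1})+\xi(S_{2})+\xi(S_{0})+\alpha,
\]
where $\alpha$ is the number of free isotopy classes of boundary circles of $S_{0}$ lying in $S_{1}\cup S_{2}$. Since $\xi(\partial H)=5$ uniformly for all three handlebodies, this yields a single enumeration over the range $2\leq\xi(S_{1})+\xi(S_{2})\leq 4$, organized by the value of $\alpha$ and the number of components of $S_{0}$, and avoids the threefold repetition your $\chi$-based approach would require.
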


\begin{proof}[Proof of Lemma~\ref{17_cases _of_complexity_five_handlebodies}]
Let $\alpha$ be the number of free isotopy classes of boundary components of $S_{0}$ that are contained in $S_{1}\cup S_{2}$.
We have $\alpha>0$, since $S$ is connected and $S_{1}\cap S_{2}=\emptyset$.
Let $\xi(S_{0})$ be the sum of complexities of the components of $S_{0}$.
Then $\xi(\partial H)=\xi(S_{1})+\xi(S_{2})+\xi(S_{0})+\alpha$.
Since $S_{i}$ ($i=1, 2$) contains at least one curve, we have $2\leq \xi(S_{1})+\xi(S_{2})$.
Further, $\xi(S_{1})+\xi(S_{2})=5-\xi(S_{0})-\alpha\leq 5-0-1=4$.
Therefore it follows that $2\leq \xi(S_{1})+\xi(S_{2})\leq 4$.
We note that if $S_{1}$ or $S_{2}$ is a surface whose complexity is one, then it is homeomorphic to only $S_{0, 4}$.
In fact, if it is homeomorphic to $S_{1, 1}$, then it cannot have two curves $\partial a$ and $\partial c$ since $H_{1, 1}$ has only one isotopy class of disk.

We suppose that $\xi(S_{1})+\xi(S_{2})=4$.
Then we have $\xi(S_{0})+\alpha=1$.
If $\xi(S_{1})=1$ and $\xi(S_{2})=3$, then $S_{1}\approx S_{0, 4}$ and $S_{2}\approx S_{0, 6}, S_{1, 3}$.
If $\xi(S_{1})=2$ and $\xi(S_{2})=2$, then $S_{1}\approx S_{0,5}, S_{1, 2}$ and $S_{2}\approx S_{0,5}, S_{1, 2}$.
By the assumption that $\alpha\geq 1$, we have $\alpha=1$ and $\xi(S_{0})=0$.
Since $S_{0}$ has at least two boundary components, $S_{0}\approx S_{0, 2}$ or $S_{0}\approx S_{0, 3}$.
If $S_{0}\approx S_{0, 3}$, then this contradicts the assumption that $\alpha=1$.
Hence, we have $S_{0}\approx S_{0, 2}$.
Case (1) is immediate.

We suppose that $\xi(S_{1})+\xi(S_{2})=3$.
Then we have $\xi(S_{0})+\alpha=2$.
Without loss of generality we may assume $\xi(S_{1})=1$ and $\xi(S_{2})=2$.
It follows that $S_{1}\approx S_{0, 4}$ and $S_{2}\approx S_{0, 5}, S_{1, 2}$.
If $\alpha=1$, then $S_{0}$ forced to be an annulus and we have a contradiction of the fact that $\xi(S_{0})+\alpha=2$.
So we have $\alpha=2$ and $\xi(S_{0})=0$.
If $S_{0}$ is connected, then $\alpha=2$ implies that $S_{0}\approx S_{0, 3}$, and hence Case (2) follows.
We assume that $S_{0}$ is not connected.
By the assumption that $\alpha=2$, the number of connected components of $S_{0}$ is at most two.
We have $S_{0}\approx S_{0, 2}\coprod S_{0, 2}$ or $S_{0}\approx S_{0, 2}\coprod S_{0, 3}$.
If $S_{0}\approx S_{0, 2}\coprod S_{0, 2}$, then we have two cases where each component intersects both $S_{1}$ and $S_{2}$, and one (we name it $I$) of the component is disjoint from $S_{2}$. 
In the former case, we obtain Case (3).
In the latter case, if $S_{1}\cap I\approx S^{1}$, then there is no essential disk in $I$, and so we have a contradiction to $\alpha=2$.
Hence Case (4) follows.
If $S_{0}\approx S_{0, 2}\coprod S_{0, 3}$, then $S_{0, 3}$ has to be disjoint from $S_{2}$ and $S_{0, 3}\cap S_{1}\approx S^{1}$ since $\alpha=2$.
Then we obtain Case (5).

\begin{figure}[h]
\includegraphics[scale=0.37]{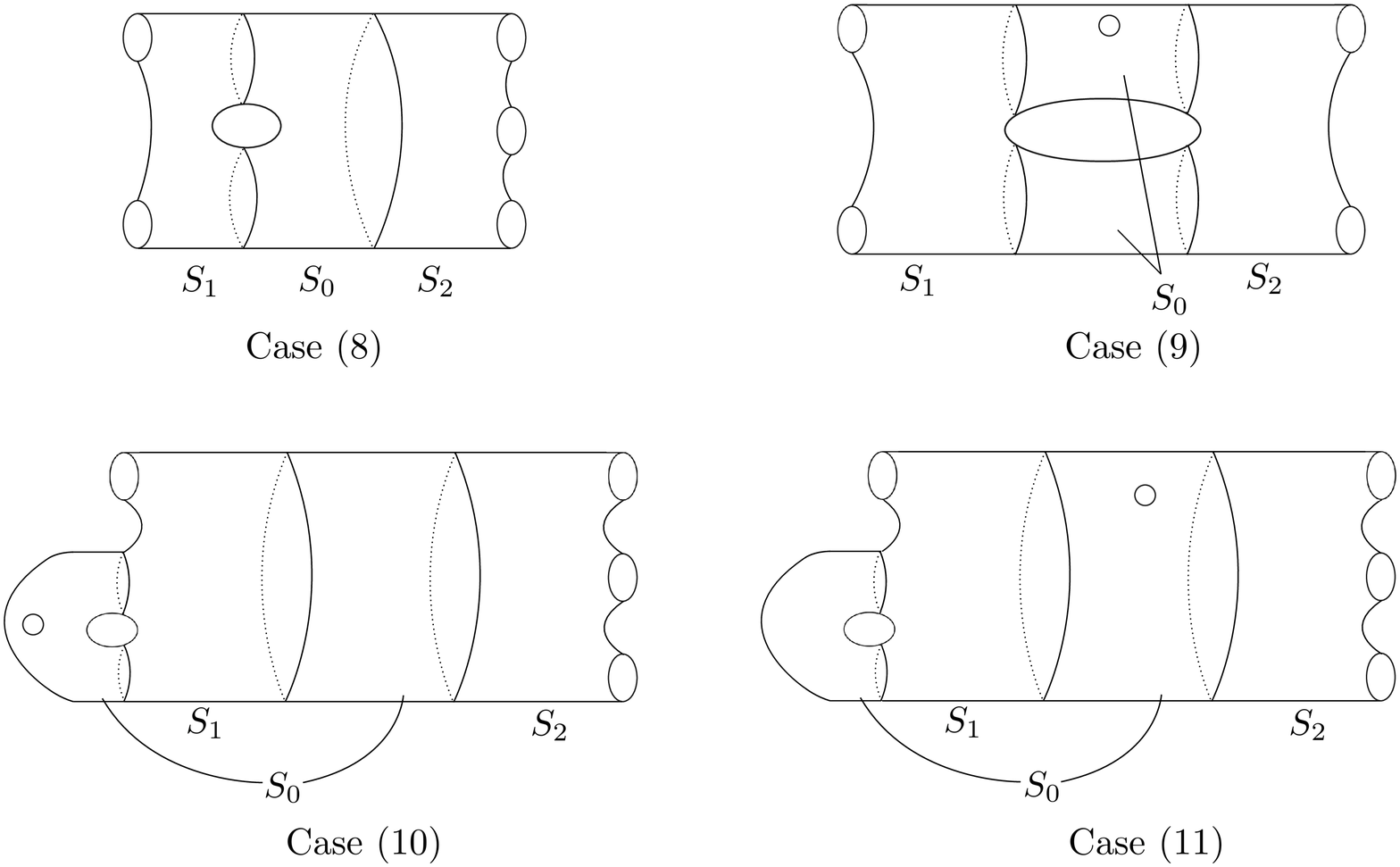}
\caption{The handlebody of Cases (8), (9), (10), and (11).}\label{fig_case_8_9_10_11}
\end{figure}

We suppose that $\xi(S_{1})+\xi(S_{2})=2$.
Then $\xi(S_{1})=1$ and $\xi(S_{2})=1$.
Thus $S_{1}\approx S_{0, 4}$ and $S_{2}\approx S_{0, 4}$.
Moreover it follows that $\xi(S_{0})+\alpha=3$.
If $\alpha=1$, then by a similar argument to that of Case (1) we have $S_{0}\approx S_{0, 2}$, and this contradicts the fact that $\xi(S_{0})=2$.
First, we consider the case where $\alpha=2$ and $\xi(S_{0})=1$.
If $S_{0}$ is connected, then $\alpha=2$ implies that $S_{0}\approx S_{0, 4}$ and $S_{0, 4}\cap S_{1}\approx S^{1}$, $S_{0, 4}\cap S_{2}\approx S^{1}$, hence Case (6) follows.
We assume that $S_{0}$ is not connected.
By the assumption that $\alpha=2$, the number of connected components of $S_{0}$ is at most two and the component which intersect both $S_{1}$ and $S_{2}$ has to be $S_{0, 2}$.
The other component (we name $I$) which is disjoint from $S_{2}$ is $S_{0, 4}$ or $S_{1, 1}$, and $I\cap S_{1}\approx S^{1}$. 
Then Case (7) follows.
Next, we consider the case where $\alpha=3$ and $\xi(S_{0})=0$.
If $S_{0}$ is connected, then $\alpha=3$ implies that $S_{0}\approx S_{0, 3}$, $S_{0, 3}\cap S_{1}\approx S^{1}\coprod S^{1}$, and $S_{0, 3}\cap S_{2}\approx S^{1}$.
Hence Case (8) follows (see Figure~\ref{fig_case_8_9_10_11}).
We assume that $S_{0}$ is not connected.
By the assumption that $\alpha=3$, the number of connected components of $S_{0}$ is at most three.
First we suppose that the number of connected components of $S_{0}$ is two.
By the assumption that $\xi(S_{0})=0$, $S_{0}\approx S_{0, 2}\coprod S_{0, 2}$ or $S_{0}\approx S_{0, 2}\coprod S_{0, 3}$.
If $S_{0}\approx S_{0, 2}\coprod S_{0, 2}$, then this contradicts the assumption that $\alpha=3$, and so we have $S_{0}\approx S_{0, 2}\coprod S_{0, 3}$.
If each component of $S_{0}$ intersects both $S_{1}$ and $S_{2}$, then Case (9) is immediate (see Figure~\ref{fig_case_8_9_10_11}).
If the $S_{0, 2}$ component intersects both $S_{1}$ and $S_{2}$, and $S_{0, 3}$ component is disjoint from $S_{2}$, then Case (10) is immediate (see Figure~\ref{fig_case_8_9_10_11}).
If the $S_{0, 3}$ component intersects both $S_{1}$ and $S_{2}$, and $S_{0, 2}$ component is disjoint from $S_{2}$, then Case (11) is immediate (see Figure~\ref{fig_case_8_9_10_11}).

\newpage
\begin{figure}[h]
\includegraphics[scale=0.37]{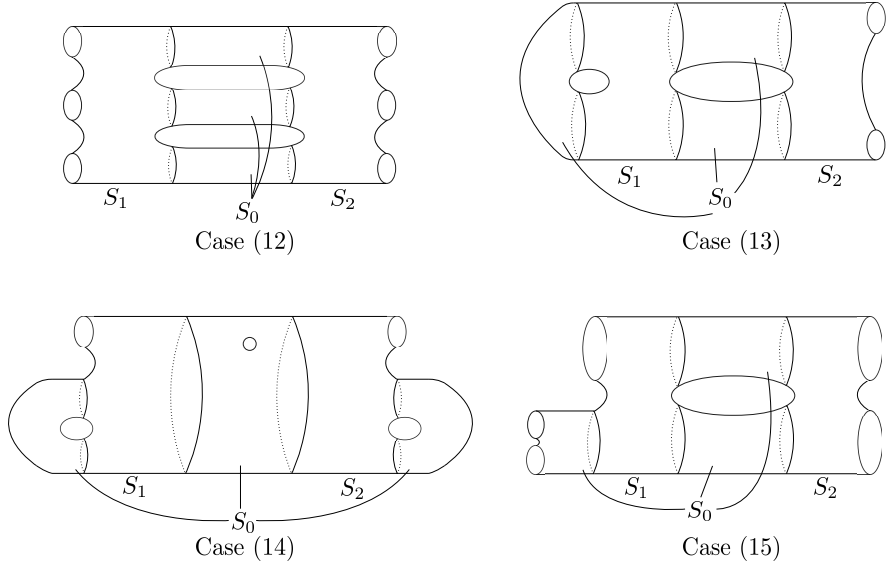}
\caption{The handlebody of Cases (12), (13), (14), and (15).}\label{fig_case_12_13_14_15}
\end{figure}

\begin{figure}[h]
\includegraphics[scale=0.37]{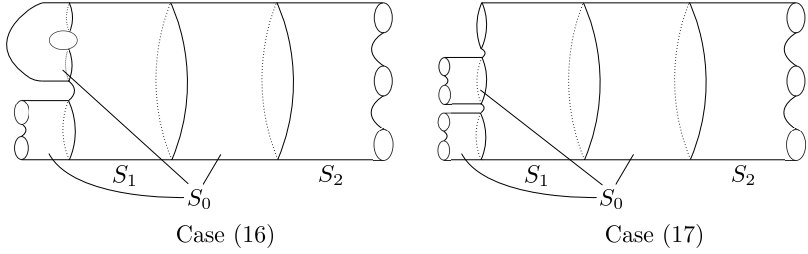}
\caption{The handlebody of Cases (16) and (17).}\label{fig_case_16_17}
\end{figure}
Finally, we suppose that the number of connected components of $S_{0}$ is three.
By the assumption that $\xi(S_{0})=0$, $S_{0}\approx S_{0, 2}\coprod S_{0, 2}\coprod S_{0, 2}$, $S_{0}\approx S_{0, 2}\coprod S_{0, 2}\coprod S_{0, 3}$, or $S_{0}\approx S_{0, 2}\coprod S_{0, 3}\coprod S_{0, 3}$.
We note that if $S_{0}\approx S_{0, 3}\coprod S_{0, 3}\coprod S_{0, 3}$, then this contradicts the assumption that $\alpha=3$.
We assume that $S_{0}\approx S_{0, 2}\coprod S_{0, 2}\coprod S_{0, 2}$.
If each component of $S_{0}$ intersects both $S_{1}$ and $S_{2}$, then Case (12) is immediate (see Figure~\ref{fig_case_12_13_14_15}).
If two components of $S_{0}$ intersect both $S_{1}$ and $S_{2}$ and the other component (we name it $I$) is disjoint from $S_{2}$, then $I\cap S_{1}\approx S^{1}\coprod S^{1}$.
We obtain Case (13) (see Figure~\ref{fig_case_12_13_14_15}).
We assume that just one component of $S_{0}$ intersects both $S_{1}$ and $S_{2}$.
We also assume that one of the other component (we name it $I$) is disjoint from $S_{2}$ and the other component (we name it $J$) is disjoint from $S_{1}$.
Then $I\cap S_{1}\approx S^{1}\coprod S^{1}$ and $J\cap S_{2}\approx S^{1}\coprod S^{1}$, and so Case (14) follows (see Figure~\ref{fig_case_12_13_14_15}).
We assume that $S_{0}\approx S_{0, 2}\coprod S_{0, 2}\coprod S_{0, 3}$.
Note that the $S_{0, 3}$ component of $S_{0}$ must not intersect both $S_{1}$ and $S_{2}$ since $\alpha=3$.
If each $S_{0, 2}$ component intersects both $S_{1}$ and $S_{2}$, then the $S_{0, 3}$ component is disjoint from $S_{2}$ and $S_{0, 3}\cap S_{1}\approx S^{1}$, and Case (15) is immediate (see Figure~\ref{fig_case_12_13_14_15}).
We assume that just one of the $S_{0, 2}$ component intersects both $S_{1}$ and $S_{2}$, and the other component (we name it $I$) is disjoint from $S_{2}$.
We also suppose that the $S_{0, 3}$ component is disjoint from $S_{2}$.
Then $I\cap S_{1}\approx S^{1}\coprod S^{1}$ and $S_{0, 3}\cap S_{1}\approx S^{1}$, hence Case (16) follows (see Figure~\ref{fig_case_16_17}).
We assume that $S_{0}\approx S_{0, 2}\coprod S_{0, 3}\coprod S_{0, 3}$.
Note that each of the $S_{0, 3}$ components of $S_{0}$ must not intersect both $S_{1}$ and $S_{2}$ since $\alpha=3$.
Then the $S_{0, 2}$ component intersects both $S_{1}$ and $S_{2}$ and two $S_{0, 3}$ components are disjoint from $S_{2}$, and moreover each $S_{0, 3}$ component intersects $S_{1}$ at only one boundary component respectively.
We obtain Case (17) (see Figure~\ref{fig_case_16_17}).
\end{proof}

\begin{lem}\label{five_cases_of complexity_four_handlebodies}
Let $H$ be a handlebody with $\xi(H)=4$.
Then, the triple $(S_{0}, S_{1}, S_{2})$ satisfies exactly one of the following five cases, possibly after switching the roles of $S_{1}$ and $S_{2}$.

\begin{itemize}
\item[{\rm (1)$'$}] $S_{1}\in \{S_{1, 2}, S_{0, 5}\}$, $S_{2}\approx S_{0, 4}$, $S_{0}\approx S_{0,2}$, and $S_{0}$ intersects both $S_{1}$ and $S_{2}$.

\item[{\rm (2)$'$}] $S_{1}, S_{2}\approx S_{0, 4}$, $S_{0}\approx S_{0, 3}$, and $S_{0}$ intersects each of $S_{1}$ and $S_{2}$ at only one boundary component.

\item[{\rm (3)$'$}] $S_{1}, S_{2}\approx S_{0, 4}$, $S_{0}\approx S_{0, 2}\coprod S_{0, 2}$, and each component of $S_{0}$ intersects both $S_{1}$ and $S_{2}$.

\item[{\rm (4)$'$}] $S_{1}, S_{2}\approx S_{0, 4}$, $S_{0}\approx S_{0, 2}\coprod S_{0, 2}$, and one component of $S_{0}$ intersects each of $S_{1}$ and $S_{2}$ at only one boundary component, while the other component of $S_{0}$ intersects $S_{1}$ at two boundary components.

\item[{\rm (5)$'$}] $S_{1}, S_{2}\approx S_{0, 4}$, $S_{0}\approx S_{0, 2}\coprod S_{0, 3}$ such that the $S_{0, 2}$ component intersects both $S_{1}$ and $S_{2}$ and the $S_{0, 3}$ component is disjoint from $S_{2}$, and moreover, $S_{0, 3}\cap S_{1}\approx S^{1}$.
\end{itemize}
\end{lem}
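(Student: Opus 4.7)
The plan is to mirror the proof of Lemma~\ref{17_cases _of_complexity_five_handlebodies}, specialized to $\xi(\partial H) = 4$. I would let $\alpha$ denote the number of free isotopy classes of boundary components of $S_0$ contained in $S_1 \cup S_2$ and use the complexity formula
\begin{equation*}
\xi(\partial H) = \xi(S_1) + \xi(S_2) + \xi(S_0) + \alpha.
\end{equation*}
Connectedness of $\partial H$ together with $S_1 \cap S_2 = \emptyset$ yields $\alpha \geq 1$, and the presence of the two non-isotopic essential simple closed curves $\partial a, \partial c$ in $S_1$ (respectively $\partial b, \partial d$ in $S_2$) forces $\xi(S_i) \geq 1$ for $i = 1, 2$; as in the previous proof, $\xi(S_i) = 1$ implies $S_i \approx S_{0, 4}$. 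Substituting $\xi(\partial H) = 4$ then gives
\begin{equation*}
2 \leq \xi(S_1) + \xi(S_2) \leq 3,
\end{equation*}
which is one lower than in Lemma~\ref{17_cases _of_complexity_five_handlebodies} and cuts the number of surviving configurations from seventeen to five.

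I would then split on the value of $\xi(S_1) + \xi(S_2)$. For $\xi(S_1) + \xi(S_2) = 3$, I may assume $\xi(S_1) = 2$ and $\xi(S_2) = 1$, so $S_2 \approx S_{0, 4}$ and $S_1 \in \{S_{0, 5}, S_{1, 2}\}$; the residual equation $\xi(S_0) + \alpha = 1$ forces $\alpha = 1$ and $\xi(S_0) = 0$, and the argument of Case~(1) of the proof of Lemma~\ref{17_cases _of_complexity_five_handlebodies} applies verbatim to give $S_0 \approx S_{0, 2}$ meeting both $S_1$ and $S_2$, producing Case~(1)$'$.

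For $\xi(S_1) + \xi(S_2) = 2$, both $S_i \approx S_{0, 4}$ and $\xi(S_0) + \alpha = 2$. The sub-case $\alpha = 1$ is excluded exactly as before, since it would force $S_0 \approx S_{0, 2}$ and thus $\xi(S_0) = 0$, contradicting $\xi(S_0) = 1$. So $\alpha = 2$ and $\xi(S_0) = 0$, and I would enumerate the possible component structures of $S_0$. A connected $S_0$ must be $S_{0, 3}$ ($S_{0, 2}$ is excluded because it would give $\alpha = 1$), and analyzing how its boundary circles split between $\partial S_1$ and $\partial S_2$ produces Case~(2)$'$. A two-component $S_0$ is either $S_{0, 2} \coprod S_{0, 2}$, which splits by configuration into Cases~(3)$'$ and (4)$'$ according to whether both annuli bridge $S_1, S_2$ or one annulus has both boundary circles on $\partial S_1$; or $S_{0, 2} \coprod S_{0, 3}$, in which case the constraint $\alpha = 2$ forces the $S_{0, 3}$ component to be disjoint from $S_2$ and to meet $S_1$ in a single circle, giving Case~(5)$'$. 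Three or more components of $S_0$ are ruled out by the $\alpha = 2$ constraint, since each component contributes at least one isotopy class, and coincidences of classes between distinct components are incompatible with the topology of $\partial H$ in this complexity range.

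The main obstacle is exactly the same combinatorial bookkeeping as in the proof of Lemma~\ref{17_cases _of_complexity_five_handlebodies}: for each candidate topological type of $S_0$, one must check that its boundary circles distribute consistently between $\partial S_1$ and $\partial S_2$, that the corresponding value of $\alpha$ comes out as prescribed, and that the configuration is realizable inside some handlebody with $\xi(H) = 4$. Apart from this case-by-case verification the argument is a direct specialization of the previous proof.
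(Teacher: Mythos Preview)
Your proposal is correct and follows exactly the approach the paper indicates: the paper does not write out a separate proof but simply notes that Lemma~\ref{five_cases_of complexity_four_handlebodies} is proved by the same process as Lemma~\ref{17_cases _of_complexity_five_handlebodies}, with the single observation that a complexity-one $S_i$ must be $S_{0,4}$ (not $S_{1,1}$) since $H_{1,1}$ contains only one isotopy class of disk. Your write-up supplies precisely that specialization, including the correct range $2 \le \xi(S_1)+\xi(S_2) \le 3$ and the corresponding case split.
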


Note that we obtain Lemma~\ref{five_cases_of complexity_four_handlebodies} by changing the assumption of a surface $S$ in \cite[Lemma 13]{Kim-Koberda13} to a handlebody $H$.
We can prove this by the same process as the proof of Lemma~\ref{17_cases _of_complexity_five_handlebodies}.
In our case, if $S_{1}$ and $S_{2}$ are surfaces whose complexities are one, then they are homeomorphic to only $S_{0, 4}$.
On the other hand, in \cite[Lemma 13]{Kim-Koberda13} if $S_{1}$ and $S_{2}$ are surfaces whose complexities are one, then they are homeomorphic to $S_{0, 4}$ or $S_{1, 1}$ .


\begin{lem}\label{second_lemma_of_third_theorem}
For $H=H_{1, 5}$, there exists an embedding from $A(\Gamma_{0})$ to ${\rm Mod}(H)$, but $\Gamma_{0}$ does not embed into $\mathcal{D}(H)$ as an induced subgraph.
\end{lem}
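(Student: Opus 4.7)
The strategy has two parts. For the existence of an embedding $A(\Gamma_{0})\hookrightarrow {\rm Mod}(H_{1,5})$, I would chain together three facts already available in the paper: Lemma~\ref{embedding_Gamma_{1}_into_disk_graphs} gives an induced embedding $\Gamma_{1}\leq \mathcal{D}(H_{1,5})$; Theorem~\ref{main_thm} then promotes this to an embedding $A(\Gamma_{1})\hookrightarrow {\rm Mod}(H_{1,5})$ (sending each vertex to a sufficiently high power of the corresponding disk twist); and Lemma~\ref{first_lemma_of_third_thm_Kim-Koberda13} provides the injection $\phi\colon A(\Gamma_{0})\hookrightarrow A(\Gamma_{1})$ with $\phi(q)=ef$ and $\phi(v)=v$ otherwise. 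The composition of these three maps is the desired embedding.

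For the non-embedding $\Gamma_{0}\not\leq \mathcal{D}(H_{1,5})$, I would argue by contradiction. Suppose an induced embedding exists, let $a,b,c,d$ be the disks realizing the cycle $C_{4}$, and let $q$ denote the disk realizing the remaining vertex, with its adjacency and non-adjacency relations to $a,b,c,d$ read off from $\Gamma_{0}$. Following the set-up of the preceding lemmas, I would take $S_{1}$ to be a regular neighborhood in $\partial H_{1,5}$ of $\partial a\cup \partial c$, take $S_{2}$ to be a regular neighborhood of $\partial b\cup \partial d$ chosen disjoint from $S_{1}$, and set $S_{0}=\overline{\partial H_{1,5}-(S_{1}\cup S_{2})}$. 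Since $\xi(H_{1,5})=5$, Lemma~\ref{17_cases _of_complexity_five_handlebodies} classifies the triple $(S_{0},S_{1},S_{2})$ into seventeen possibilities.

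The plan is then to enumerate and eliminate each case. A first filter is topological: the glued surface $S_{1}\cup S_{0}\cup S_{2}$ must be homeomorphic to $S_{1,5}$, which already excludes most of the seventeen cases on grounds of genus and puncture count. For each surviving configuration, the prescribed adjacency pattern of $q$ with $a,b,c,d$ translates into a pattern of essential minimal-position intersections of $\partial q$ with the boundary circles of $S_{1}$ and $S_{2}$. Arc by arc, I would track which components of $S_{0}$ the pieces of $\partial q$ must enter and exit, and then check whether a simple closed curve with the required intersection pattern can exist on $S_{1,5}$ at all.

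The main obstacle will be the final, genuinely handlebody-theoretic step: even when such a curve does exist on the surface, it must additionally bound a properly embedded essential disk in the genus-one handlebody $H_{1,5}$. This is the critical constraint, because the analogous curve-graph statement is strictly weaker (indeed Theorem~\ref{main_thm} together with the corresponding construction in \cite{Kim-Koberda13} shows that the curve graph alone cannot provide the obstruction). Identifying which isotopy classes of simple closed curves on $S_{1,5}$ bound disks in $H_{1,5}$—using the meridian disk structure of the single one-handle—and verifying that this disk-bounding property fails for every candidate $q$ in each surviving case, will be the technical heart of the proof.
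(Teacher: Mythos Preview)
Your first half is exactly the paper's argument: chain $A(\Gamma_{0})\hookrightarrow A(\Gamma_{1})$ from Lemma~\ref{first_lemma_of_third_thm_Kim-Koberda13} with $A(\Gamma_{1})\hookrightarrow {\rm Mod}(H_{1,5})$ from Lemma~\ref{embedding_Gamma_{1}_into_disk_graphs} and Lemma~\ref{lemma_of_main_thm}.

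The second half has a genuine gap. You write ``let $q$ denote the disk realizing the remaining vertex,'' but $\Gamma_{0}$ has \emph{seven} vertices $\{a,b,c,d,g,h,q\}$, not five. The vertices $g$ and $h$ are not optional decoration: in the paper's proof they carry almost all of the work. From $\Gamma_{0}$ one reads off $\partial q\subseteq S_{0}$, $g\cap S_{1}=h\cap S_{2}=\emptyset$, and the nonemptiness of $g\cap h$, $g\cap q$, $h\cap q$, $g\cap S_{2}$, $h\cap S_{1}$. The seventeen cases of Lemma~\ref{17_cases _of_complexity_five_handlebodies} are then dispatched almost entirely by forcing $g\cap h=\emptyset$ (Cases (1), (3)--(5), (7), (10), (12)--(17)) or $g\cap q=\emptyset$ or $g\cap S_{1}\neq\emptyset$ (Cases (2), (8), (9), (11)); only Case~(6) is excluded on the topological ground that it does not occur for $\partial H_{1,5}$. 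With $q$ alone the sole constraint is $\partial q\subseteq S_{0}$, which is far too weak: in most of the seventeen configurations $S_{0}$ contains essential curves bounding disks, so no contradiction arises.

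Relatedly, your announced ``technical heart''---checking which candidate curves for $q$ actually bound disks in the handlebody---is not how the obstruction works here. The handlebody hypothesis enters only through Lemma~\ref{17_cases _of_complexity_five_handlebodies} itself (ruling out $S_{1,1}$ as a possible $S_{1}$ or $S_{2}$, since $H_{1,1}$ has a unique disk). After that, the elimination of cases is a purely curve-level argument on $\partial H$, driven by the adjacency pattern of $g$ and $h$. Your topological filter (``the glued surface must be $S_{1,5}$'') also overreaches: it disposes of exactly one case (Case~(6)), not ``most.'' Restore $g$ and $h$ to your setup and rerun the case analysis using the intersection constraints listed above; the disk-bounding analysis you propose is then unnecessary.
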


\begin{proof}[Proof of Lemma~\ref{second_lemma_of_third_theorem}]
First, we show the first half of the conclusion.
By Lemma~\ref{first_lemma_of_third_thm_Kim-Koberda13}, $A(\Gamma_{0})\leq A(\Gamma_{1})$.
By Lemma~\ref{embedding_Gamma_{1}_into_disk_graphs} and Lemma~\ref{lemma_of_main_thm}, $A(\Gamma_{1})\leq {\rm Mod}(H)$.
Therefore, it follows that $A(\Gamma_{0})\leq {\rm Mod}(H)$.
For the second half, we suppose that $\Gamma_{0}\leq \mathcal{D}(H)$.
We regard $a$, $b$, $c$, $d$, $g$, $h$, and $q$ as disks in $H$.
From $C_{4}\leq \Gamma_{0}$, we have one of the seventeen cases in Lemma~\ref{17_cases _of_complexity_five_handlebodies}.
By the definition of disk graphs, the intersections $q\cap g$, $q\cap h$, $g\cap S_{2}$, $h\cap S_{1}$, and $g\cap h$ are not empty.
Moreover $\partial q\subseteq S_{0}$ and $g\cap S_{1}=h\cap S_{2}=\emptyset$.

In Case (1), the annulus $S_{0}$ connects $S_{1}$ and $S_{2}$.
This implies that $\partial g\subseteq S_{2}$ and $\partial h\subseteq S_{1}$, and so $g\cap h=\emptyset$.
This is a contradiction.
In Case (2), since $\partial q\subseteq S_{0}\approx S_{0, 3}$, we have $\partial q=S_{0}\cap S_{1}$ or $\partial q=S_{0}\cap S_{2}$.
Without loss of generality, we may assume $\partial q=S_{0}\cap S_{1}$.
The curve $\partial q$ is a separating curve which separates $S_{1}$ from $\partial H$.
By the fact that $g\cap S_{1}=\emptyset$, we have $g\cap q=\emptyset$.
This contradicts the fact that $g\cap q\not=\emptyset$.
In Cases (3), (4), and (5), if $\partial g$ intersects $\partial h$, then they intersect on the components of $S_{0}$ which connect $S_{1}$ and $S_{2}$.
Similarly to Case (1), we have $g\cap h=\emptyset$ and this is a contradiction.
Case (6) does not appear for $H_{1, 5}$.
Note that we see $\Gamma_{0}$ is embedded in $\mathcal{D}(H_{0, 8})$.
In Case (7), by a similar argument to that of Case (1) we have $g\cap h=\emptyset$, and this is a contradiction.
In Case (8), we have $\partial q\subseteq S_{0}\cap S_{1}$ or $\partial q=S_{0}\cap S_{2}$ since $\partial q\subseteq S_{0}\approx S_{0, 3}$.
Without loss of generality we may assume that $\partial q\subseteq S_{0}\cap S_{1}$.
By the fact that $g\cap q\not=\emptyset$, $\partial g$ intersects $\partial q$.
Then it follows that $\partial g$ intersects $S_{1}$.
This is a contradiction.
In Case (9), by a similar argument to that of Case (8), we have $g\cap q\not=\emptyset$.
This is a contradiction.
In Case (11), by a similar argument to that of Case (2), we have $g\cap q=\emptyset$.
This contradicts the fact that $g\cap q\not=\emptyset$.
In Cases (10), (12), (13), (14), (15), (16), and (17), by a similar argument to that of Case (3), we have $g\cap h=\emptyset$.
This is a contradiction.
Therefore it follows that $\Gamma_{0}\not\leq \mathcal{D}(H_{1, 5})$.
\end{proof}

\begin{lem}{\rm(}\cite[Lemma 14]{Kim-Koberda13}{\rm)}\label{second_lemma_of_third_theorem_Kim-Koberda13}
Let $S$ be a surface with $\xi(S)=4$.
There exists an embedding from $A(\Gamma_{0})$ to ${\rm Mod}(S)$, but $\Gamma_{0}$ does not embed into $\mathcal{C}(S)$ as an induced subgraph.
\end{lem}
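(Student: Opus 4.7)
My plan is to mirror, essentially verbatim, the proof of Lemma~\ref{second_lemma_of_third_theorem} that was just given for the handlebody case, replacing disks by essential simple closed curves, disk twists by Dehn twists, and the disk graph by the curve graph. The proof breaks into two halves: constructing an embedding $A(\Gamma_0)\leq \mathrm{Mod}(S)$, and showing $\Gamma_0\not\leq \mathcal{C}(S)$.

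For the first half, I would factor the embedding through $A(\Gamma_1)$. Step one: Lemma~\ref{first_lemma_of_third_thm_Kim-Koberda13} gives an injective homomorphism $\phi\colon A(\Gamma_0)\hookrightarrow A(\Gamma_1)$. Step two: I would exhibit $\Gamma_1$ as an induced subgraph of $\mathcal{C}(S)$ when $\xi(S)=4$, i.e.\ for $S\in\{S_{0,7},S_{1,4},S_{2,1}\}$. For $S=S_{0,7}$, this is essentially immediate from Lemma~\ref{embedding_Gamma_{1}_into_disk_graphs}: the disk realization in $H_{0,7}$ pushes down to a curve realization on $\partial H_{0,7}=S_{0,7}$. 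For $S_{1,4}$ and $S_{2,1}$, since curves are far more flexible than disks, analogous ad-hoc pictures exist and can be drawn directly; one then verifies minimal position via the no-bigon criterion. Step three: apply Lemma~\ref{lemma_of_main_thm_Kim-Koberda13} (Koberda's extension theorem) to get $A(\Gamma_1)\leq \mathrm{Mod}(S)$, and compose with $\phi$.

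For the second half, I would argue by contradiction. Assume $\Gamma_0$ is realized as an induced subgraph of $\mathcal{C}(S)$ by curves $a,b,c,d,g,h,q$. Because $\{a,b,c,d\}$ spans $C_4$, the classification of four-cycle configurations in a complexity-four surface (Kim-Koberda~\cite[Lemma~13]{Kim-Koberda13}, whose handlebody analog is Lemma~\ref{five_cases_of complexity_four_handlebodies}) gives a finite list of possibilities for the triple $(S_0,S_1,S_2)$, where $S_1,S_2$ are regular neighborhoods of $\partial a\cup\partial c$ and $\partial b\cup\partial d$ respectively and $S_0=\overline{S\setminus(S_1\cup S_2)}$. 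In each case I would exploit the incidence constraints imposed by the rest of $\Gamma_0$: namely $\partial q\subseteq S_0$, $g\cap S_1=\emptyset$, $h\cap S_2=\emptyset$, together with $q\cap g\neq\emptyset$, $q\cap h\neq\emptyset$, and $g\cap h\neq\emptyset$. In cases where $S_0$ is an annulus or separating multi-annulus connecting $S_1$ to $S_2$, the disjointness constraints force $\partial g\subseteq S_2$ and $\partial h\subseteq S_1$, whence $g\cap h=\emptyset$, a contradiction; in cases where $S_0\approx S_{0,3}$ with $\partial q$ a separating curve cutting off $S_1$, the constraint $g\cap S_1=\emptyset$ forces $g\cap q=\emptyset$, again a contradiction. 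Each of the five cases yields a contradiction of one of these two types.

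The main obstacle is purely bookkeeping: enumerating the small number of extra subcases that appear in the surface setting but not in the handlebody setting, specifically those where $S_1$ or $S_2$ may be homeomorphic to $S_{1,1}$ (in the handlebody case this is ruled out because $H_{1,1}$ carries only one isotopy class of essential disk, and $S_1,S_2$ must contain the two distinct curves $\partial a,\partial c$ or $\partial b,\partial d$, but on a surface $S_{1,1}$ does admit two disjoint essential curves after all---so in fact one needs to check that in the $S_{1,1}$ subcases the same style of contradiction still runs). Beyond that, the argument is a case-by-case translation and should require no essentially new ideas.
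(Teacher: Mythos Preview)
The paper does not give its own proof here; it simply cites \cite[Lemma~14]{Kim-Koberda13}, and your plan is a faithful reconstruction of that argument (the surface analogue of the proof of Lemma~\ref{second_lemma_of_third_theorem}).

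One slip in your aside: you write that $S_{1,1}$ ``does admit two disjoint essential curves after all.'' This is false---any two non-isotopic essential simple closed curves on a once-punctured torus intersect. The correct reason $S_1\approx S_{1,1}$ can occur in the surface setting is that $a$ and $c$ (being non-adjacent in $C_4$) \emph{intersect}, and when $i(a,c)=1$ their regular neighborhood is a one-holed torus; the handlebody argument rules this out because $H_{1,1}$ carries a unique isotopy class of essential disk, not because of any disjointness condition. This is a misstatement in an explanatory remark rather than a gap in the strategy, but it is worth correcting before you work through the extra $S_{1,1}$ subcases of \cite[Lemma~13]{Kim-Koberda13}.
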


\begin{proof}[Proof of Lemma~\ref{second_lemma_of_third_theorem_Kim-Koberda13}]
For the proof see the proof of \cite[Lemma 14]{Kim-Koberda13}.
\end{proof}

\begin{lem}\label{first_lemma_of_third_theorem}
For $H=H_{0, 7}$, there exists an embedding from $A(\Gamma_{0})$ to ${\rm Mod}(H)$, but $\Gamma_{0}$ does not embed into $\mathcal{D}(H)$ as an induced subgraph.
\end{lem}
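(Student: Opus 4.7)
The plan is to mirror the proof of Lemma~\ref{second_lemma_of_third_theorem} verbatim, replacing Lemma~\ref{17_cases _of_complexity_five_handlebodies} with Lemma~\ref{five_cases_of complexity_four_handlebodies}, because $\xi(H_{0,7}) = 4$ rather than $5$.

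For the existence of the embedding, I would chain three earlier facts. By Lemma~\ref{first_lemma_of_third_thm_Kim-Koberda13}, the map $\phi$ yields $A(\Gamma_0) \leq A(\Gamma_1)$. By Lemma~\ref{embedding_Gamma_{1}_into_disk_graphs}, $H_{0,7}$ is one of the handlebodies for which $\Gamma_1 \leq \mathcal{D}(H_{0,7})$, and Lemma~\ref{lemma_of_main_thm} (i.e.\ Theorem~\ref{main_thm}) then gives $A(\Gamma_1) \leq {\rm Mod}(H_{0,7})$. Composing the two embeddings produces $A(\Gamma_0) \leq {\rm Mod}(H_{0,7})$.

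For the nonembedding statement, I would argue by contradiction. Suppose $\Gamma_0 \leq \mathcal{D}(H_{0,7})$ and realize $a,b,c,d,g,h,q$ as disks matching the adjacency pattern of $\Gamma_0$. Form $S_0, S_1, S_2$ from the four-cycle $C_4$ on $\{a,b,c,d\}$ exactly as before Lemma~\ref{17_cases _of_complexity_five_handlebodies}. Reading off the adjacencies in $\Gamma_0$ gives the same topological constraints used in the proof of Lemma~\ref{second_lemma_of_third_theorem}: namely $\partial q \subseteq S_0$, $g\cap S_1 = h\cap S_2 = \emptyset$, $g\cap S_2 \neq \emptyset$, $h\cap S_1 \neq \emptyset$, and the three intersections $q\cap g$, $q\cap h$, $g\cap h$ are nonempty. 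Since $\xi(H_{0,7}) = 4$, Lemma~\ref{five_cases_of complexity_four_handlebodies} forces $(S_0, S_1, S_2)$ to fall into one of the five cases $(1)'$--$(5)'$; the $S_{1,2}$ alternative in case $(1)'$ is automatically excluded because $H_{0,7}$ is planar.

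The key observation is that cases $(1)'$--$(5)'$ of Lemma~\ref{five_cases_of complexity_four_handlebodies} are literally the planar restrictions of cases $(1)$--$(5)$ of Lemma~\ref{17_cases _of_complexity_five_handlebodies}, so each of the five case-analyses already carried out for $H_{1,5}$ transfers without change: in cases $(1)'$, $(3)'$, $(4)'$, $(5)'$ the components of $S_0$ that bridge $S_1$ and $S_2$ are annuli (or contain no piece on which both $\partial g$ and $\partial h$ can sit), forcing the contradiction $g\cap h = \emptyset$; in case $(2)'$, $\partial q \subseteq S_0 \approx S_{0,3}$ is forced to be parallel either to $S_0\cap S_1$ or to $S_0\cap S_2$, whence $g\cap S_1 = \emptyset$ or $h\cap S_2 = \emptyset$ makes $q$ disjoint from $g$ or from $h$, contradicting the hypothesis. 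I expect the main obstacle to be simply the bookkeeping of confirming the constraints in each case, but since the combinatorics are identical to those of the $H_{1,5}$ proof, no new geometric input is needed beyond appealing to Lemma~\ref{five_cases_of complexity_four_handlebodies}.
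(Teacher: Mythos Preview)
Your argument is correct but takes a longer route than the paper. The paper's proof is a one-line reduction: since $H_{0,7}$ has genus zero, every essential simple closed curve on $\partial H_{0,7}=S_{0,7}$ bounds a disk in $H_{0,7}$ and every homeomorphism of $S_{0,7}$ extends over the handlebody, so $\mathcal{D}(H_{0,7})\cong\mathcal{C}(S_{0,7})$ and ${\rm Mod}(H_{0,7})\cong{\rm Mod}(S_{0,7})$. The statement is then literally Kim--Koberda's surface result (Lemma~\ref{second_lemma_of_third_theorem_Kim-Koberda13}) for $S_{0,7}$, with no further work required.

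Your approach instead reruns the case analysis directly in the handlebody setting via Lemma~\ref{five_cases_of complexity_four_handlebodies}. This is valid---the five cases $(1)'$--$(5)'$ are indeed handled exactly as you describe, and they match cases $(1)$--$(5)$ of the $H_{1,5}$ proof---but it duplicates work that the genus-zero isomorphism makes unnecessary. The advantage of your route is that it is self-contained within the handlebody framework and does not invoke the surface lemma as a black box; the advantage of the paper's route is brevity, and it illustrates the general principle that all genus-zero statements about $(\mathcal{D}(H),{\rm Mod}(H))$ reduce immediately to the corresponding statements about $(\mathcal{C}(S),{\rm Mod}(S))$.
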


\begin{proof}[Proof of Lemma~\ref{first_lemma_of_third_theorem}]
It directly follows from Lemma~\ref{second_lemma_of_third_theorem_Kim-Koberda13}, since the handlebody group of $H$ is isomorphic to the mapping class group of $\partial H$, and the disk graph of $H$ is isomorphic to the curve graph of $\partial H$.
\end{proof}

From Lemmas~\ref{second_lemma_of_third_theorem} and \ref{first_lemma_of_third_theorem}, we have completed the proof of Theorem~\ref{third_thm}.
Therefore the converse of Theorem~\ref{main_thm} is not true in general.

\begin{rem}\label{Notes for high complexity cases of handlebody groups}
For a graph $\Gamma$, we define $\eta(\Gamma)$ to be the minimum of $\xi(H)$ among connected handlebodies $H$ satisfying $\Gamma\leq \mathcal{D}(H)$. 
From Lemma~\ref{second_lemma_of_third_theorem_Kim-Koberda13}, we see $\eta(\Gamma_{0})>4$.
The graph $\Gamma_{0}$ embeds into $\mathcal{D}(H_{0, 8})$ (see Figure~\ref{fig_complexity_five_handlebody_embedded_gamma0_v2}, the handlebody of Case (6) in Lemma~\ref{17_cases _of_complexity_five_handlebodies}).
Hence we see $\eta(\Gamma_{0})=5$.
Further, there exists a handle body $H$ whose complexity is four such that $\Gamma_{1}$ cannot embed into $\mathcal{D}(H)$.
From these facts, we cannot apply the arguments of Kim-Koberda~\cite[Section 4.3]{Kim-Koberda13} for high complexity cases to handlebodies.
However, if $H$ is a handlebody whose complexity is $n\geq 4$ and $H_{0, 7}\subseteq H$, then there exists a graph $\Lambda_{n}$ such that $A(\Lambda_{n})\leq {\rm Mod}(H)$ but $\Lambda_{n}\not\leq \mathcal{D}(H)$ by the same argument as that of the proof of \cite[Proposition 16]{Kim-Koberda13}.
\end{rem}

\begin{figure}[h]
\includegraphics[scale=0.55]{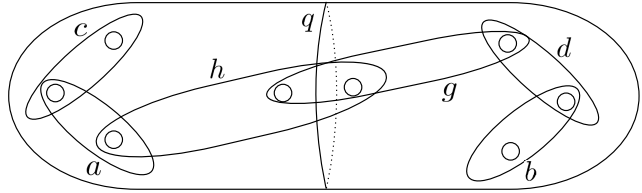}
\caption{The graph $\Gamma_{0}$ is embedded into $\mathcal{D}(H_{0, 8})$.}\label{fig_complexity_five_handlebody_embedded_gamma0_v2}
\end{figure}

\section{Proof of Theorem~\ref{fourth_thm}}

If $A$ is a multi-disk on $H$, then we denote by $\langle A\rangle$ a subgroup of ${\rm Mod}(H)$ which is generated by disk twists along the disks in $A$.
The proof of Theorem~\ref{fourth_thm} comes from the proof of Kim-Koberda~\cite[Theorem 5]{Kim-Koberda13}.

\begin{proof}[Proof of Theorem~\ref{fourth_thm}]
Let $v$ be an arbitrary vertex of $V(\Gamma)$.
We write $K$ and $L$ for two disjoint cliques of $\Gamma$ such that $K\coprod\{v\}$ and $L\coprod\{v\}$ are cliques on $N$ vertices of $\Gamma$.
We note that there are such cliques in $\Gamma$ for any $v\in V(\Gamma)$ because of the assumption.
The support of $f \langle K\rangle$ is a regular neighborhood of a multi-disk in $H$, and we call the multi-disk $A$.
Similarly, we write $B$ and $C$ for multi-disks in the supports of $f \langle L\rangle$ and $f \langle v\rangle$ in $H$ respectively.
Since $\xi(H)=N$, multi-disks $A\cup C$ and $B\cup C$ are maximal.
Note that $\langle C\rangle$ is a subgroup of $\langle A\cup C\rangle\cap\langle B\cup C\rangle$.
By the diagram in Figure~\ref{fig_diagram_v2}, $f \langle v\rangle$ is a finite index subgroup of $\langle C\rangle$.
By the fact that $\langle C\rangle\cong \mathbb{Z}^{|C|}$ and $f \langle v\rangle\cong \mathbb{Z}$, it follows that $|C|=1$.
Hence, for each $v\in V(\Gamma)$, $f(v)$ is some single disk twist $\delta_{d}$ along a disk $d$, and we obtain an injection from $\Gamma$ into $\mathcal{D}(H)$.
\end{proof}

\begin{figure}[h]
\includegraphics[scale=0.45]{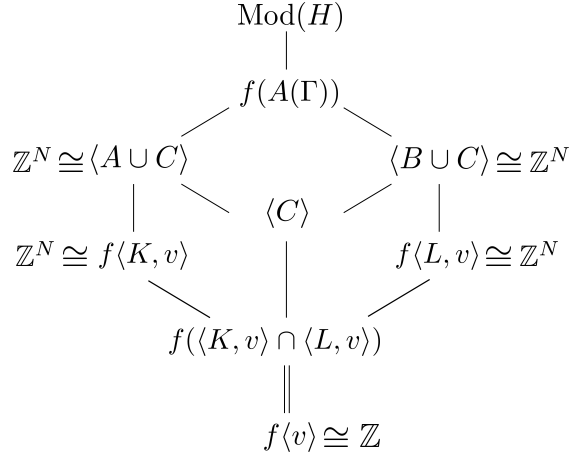}
\caption{A figure representing a relationship between groups and their subgroups.}\label{fig_diagram_v2}
\end{figure}

\section{Appendix}\label{appendix}

In this section, we prove the following proposition.

\begin{prop}\label{last_proposition}
The graph $\Gamma_{1}$ is not embedded into the disk graphs of $H$ with $\xi(H)\leq 3$, $H=H_{1, 4}$, $H=H_{2, 1}$, and $H=H_{2, 2}$.
\end{prop}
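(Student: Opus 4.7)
The plan is to extend the case-by-case analysis used to prove Lemma~\ref{second_lemma_of_third_theorem}, now applied to $\Gamma_1$ and to handlebodies strictly smaller than the positive cases $H_{0,7}$, $H_{1,5}$, $H_{2,3}$ of Lemma~\ref{embedding_Gamma_{1}_into_disk_graphs}. The approach throughout is by contradiction: assume $\Gamma_1$ embeds as an induced subgraph of $\mathcal{D}(H)$, realize its eight vertices as pairwise minimally-positioned essential disks, and set up the neighborhoods $S_1$, $S_2$, $S_0$ of the $C_4$ spanned by $\{a,b,c,d\}$ as in Section~\ref{Proof of third theorem}. The remaining adjacencies of $\Gamma_1$ then translate into a prescribed intersection pattern of the additional disks $e, f, g, h$ with the subsurfaces $S_0, S_1, S_2$, analogous to the one exploited in the proof of Lemma~\ref{second_lemma_of_third_theorem}.

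For $\xi(H) \le 3$, $\mathcal{D}(H)$ is too small to contain $\Gamma_1$: for $\xi(H) \le 1$ the disk graph is empty or a single vertex (see the proof of Theorem~\ref{second_thm}); for $\xi(H) = 2$ it is triangle-free, and a direct inspection of the restricted $C_4$-structure in $\mathcal{D}(H_{0,5})$ and $\mathcal{D}(H_{1,2})$ rules out $\Gamma_1$; for $\xi(H) = 3$, a complexity-$3$ analogue of Lemma~\ref{five_cases_of complexity_four_handlebodies} enumerates the possible $C_4$-configurations, each of which is eliminated by the methods of the proof of Lemma~\ref{second_lemma_of_third_theorem}. For the complexity-$4$ handlebodies $H_{1,4}$ and $H_{2,1}$, I invoke Lemma~\ref{five_cases_of complexity_four_handlebodies} and check the five cases (1)$'$–(5)$'$; for $H_{2,2}$ with $\xi(H) = 5$, I invoke Lemma~\ref{17_cases _of_complexity_five_handlebodies} and check all seventeen. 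In each configuration the template of Lemma~\ref{second_lemma_of_third_theorem} applies: annular-bridge components of $S_0$ force $g \cap h = \emptyset$, contradicting the $\Gamma_1$-adjacency pattern between $g$ and $h$; three-holed-sphere components in which $\partial e$ or $\partial f$ must lie as the separating curve force $g$ or $h$ to be disjoint from them, again contradicting $\Gamma_1$-adjacencies; and the configurations that are realizable in $H_{0,7}$ or $H_{1,5}$ are excluded in the positive-genus handlebodies by the further constraint that each curve on $\partial H$ must bound a compressing disk in $H$.

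The main obstacle is the combinatorial bookkeeping, especially for $H_{2,2}$, where all seventeen configurations of Lemma~\ref{17_cases _of_complexity_five_handlebodies} must be examined individually; the genus-$2$ structure permits many $C_4$-configurations in $\mathcal{C}(\partial H_{2,2})$ that correspond to actual disks only when additional meridional constraints are satisfied, and the proof must confirm in each such case that the extra vertices $e, f, g, h$ cannot be inserted consistently. The key technical input, repeated uniformly across cases, is that the intersection pattern forced by the full $\Gamma_1$-adjacency relations (the $C_4$ together with the adjacencies involving $\{e,f,g,h\}$) yields a geometric contradiction in every handlebody strictly smaller than the three positive cases of Lemma~\ref{embedding_Gamma_{1}_into_disk_graphs}.
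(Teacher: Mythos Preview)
Your approach for $\xi(H)=4$ and $\xi(H)=5$ is exactly what the paper does: invoke Lemma~\ref{five_cases_of complexity_four_handlebodies} and Lemma~\ref{17_cases _of_complexity_five_handlebodies}, run through the cases, use the annular-bridge argument to force $g\cap h=\emptyset$ in most of them, and then observe that the few surviving configurations (Case~(2)$'$ at complexity~$4$; Cases~(2), (6), (8), (9) at complexity~$5$) are realizable only in $H_{0,7}$, $H_{0,8}$, or $H_{1,5}$, not in the positive-genus handlebodies under consideration.

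The one place you diverge from the paper is $\xi(H)\le 3$, and there you are working much harder than necessary. The paper dispatches this range in a single sentence: $\Gamma_1$ contains a clique on four vertices (for instance $\{a,b,e,f\}$, since $e$ and $f$ are adjacent to every vertex of $C_4$ and to each other), and a clique of size $k$ in $\mathcal{D}(H)$ corresponds to $k$ pairwise disjoint non-isotopic disks, which forces $\xi(H)\ge k$. So $\xi(H)\le 3$ is immediate. Your proposal to build a complexity-$3$ analogue of Lemma~\ref{five_cases_of complexity_four_handlebodies} and then eliminate its configurations would work in principle, but it is redundant; the same remark applies to your ``direct inspection of the restricted $C_4$-structure'' at $\xi(H)=2$, where already the presence of a triangle in $\Gamma_1$ suffices. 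Replacing that paragraph with the clique observation both shortens the argument and makes clear why $\xi(H)=4$ is the first complexity at which any real work is required.
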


\begin{proof}[Proof of Proposition~\ref{last_proposition}]
First $\Gamma_{1}$ is not embedded into the disk graphs of $H$ with $\xi(H)\leq 3$ because $\Gamma_{1}$ has cliques on four vertices.
Next we suppose that $H$ is a handlebody with $\xi(H)=4$, that is, $H=H_{0, 7}$, $H=H_{1, 4}$, or $H=H_{2, 1}$.
We will show that $\Gamma_{1}$ is embedded into only $\mathcal{D}(H_{0, 7})$.
We suppose that $\Gamma_{1}\leq \mathcal{D}(H)$.
By the fact that $C_{4}\leq \Gamma_{1}$, we obtain one of the five cases in Lemma~\ref{five_cases_of complexity_four_handlebodies}.
From the definition of disk graphs, we see $e\cap g=\emptyset$, $f\cap h=\emptyset$, $e\cap f=\emptyset$, $g\cap S_{1}=\emptyset$, and $h\cap S_{2}=\emptyset$.
Further, $g\cap h\not=\emptyset$, $g\cap f\not=\emptyset$, $h\cap e\not=\emptyset$, $e\subseteq S_{0}$, and $f\subseteq S_{0}$.
In Case (1)$'$, the annulus $S_{0}$ connect $S_{1}$ and $S_{2}$.
This implies that $g\subseteq S_{2}$ and $h\subseteq S_{1}$, and so $g\cap h=\emptyset$.
This is a contradiction.
In Case (2)$'$, $S_{1}$ and $S_{2}$ are homeomorphic to $S_{0, 4}$.
One can confirm that $\Gamma_{1}$ is embedded into only $\mathcal{D}(H_{0, 7})$ in this case.
Note that if we discuss it for a surface $S$ with $\xi(S)=4$, then $S_{1}$ and $S_{2}$ are homeomorphic to $S_{0, 4}$ or $S_{1, 1}$, and so $\Gamma_{1}$ is embedded into $\mathcal{C}(S_{0, 7})$, $\mathcal{C}(S_{1, 4})$, and $\mathcal{C}(S_{2, 1})$.
In Cases (3)$'$, (4)$'$, and (5)$'$, by the same argument as that of Case (1)$'$, we have $g\cap h=\emptyset$, and this is a contradiction.

Secondly we suppose that $H$ is a handlebody with $\xi(H)=5$, that is, $H=H_{0, 8}$, $H=H_{1, 5}$, or $H=H_{2, 2}$.
We will show that $\Gamma_{1}$ is embedded into only $\mathcal{D}(H_{0, 8})$ and $\mathcal{D}(H_{1, 5})$.
In Cases (1), (3), (4), (5), (7), (10), (11), (12), (13), (14), (15), (16), and (17), by the same argument that $\Gamma_{0}$ is not embedded into $\mathcal{D}(H_{1, 5})$ in the proof of Lemma~\ref{second_lemma_of_third_theorem}, we have $g\cap h=\emptyset$, and this is a contradiction.
In Case (2), we see $\Gamma_{1}$ is embedded into only $\mathcal{D}(H_{0, 8})$ and $\mathcal{D}(H_{1, 5})$.
In Case (6), we see $\Gamma_{1}$ is embedded into only $\mathcal{D}(H_{0, 8})$.
In Cases (8) and (9), we see $\Gamma_{1}$ is embedded into only $\mathcal{D}(H_{1, 5})$.
By the argument above, we have finished the proof of the proposition. 
\end{proof}

\par
{\bf Acknowledgements:} The author is deeply grateful to Hisaaki Endo for his warm encouragement and helpful advice.
The author also wishes to thank Susumu Hirose and Yeonhee Jang for their meticulous comments and pieces of advice about handlebody groups and disk complexes.


\end{document}